\documentclass{amsart}
\usepackage{setspace}
\usepackage{a4}
\usepackage{amsthm,latexsym}
\usepackage{amsfonts}
\usepackage{graphicx}
\usepackage{textcomp}
\usepackage{cite}
\usepackage{enumerate}
\usepackage[mathscr]{euscript}
\usepackage{mathtools}
\newtheorem{theorem}{Theorem}[section]

\newtheorem{corollary}[theorem] {Corollary}
\newtheorem{definition}[theorem]{Definition}

\newtheorem{lemma} [theorem]{Lemma}

\newtheorem{problem}[theorem]{Problem}
\newtheorem{proposition}[theorem]{Proposition}

\setlength{\parindent}{0pt} \setlength{\evensidemargin}{0.3cm}
\setlength{\oddsidemargin}{0.3cm} \setlength{\topmargin}{-2cm}
\textwidth 16cm \textheight 23cm
\onehalfspacing
\title{This is the title}
\usepackage{amssymb}
\usepackage{amsmath}
\usepackage{tikz}
\usepackage{hyperref}
\usepackage{mathtools}
\usetikzlibrary{cd}
\raggedbottom

\usepackage{fancyhdr}

\pagestyle{fancy}
\fancyhead[LO]{\textbf{GROUP-FRAMES FOR BANACH SPACES}}
\fancyhead[RE]{\textbf{K. MAHESH KRISHNA}}

\begin{document}
	\vspace{1cm}	
\begin{center}
		\vspace{0.3cm}	
	\hrule\hrule\hrule\hrule
		\vspace{0.3cm}	
{\bf{GROUP-FRAMES FOR BANACH SPACES}}\\
	\vspace{0.3cm}
\hrule\hrule\hrule\hrule
\vspace{0.3cm}
\textbf{K. MAHESH KRISHNA}\\
	Post Doctoral Fellow \\
	Statistics and Mathematics Unit\\
	Indian Statistical Institute, Bangalore Centre\\
	Karnataka 560 059, India\\
	Email: kmaheshak@gmail.com\\
	Date: \today
\end{center}
\hrule
\vspace{0.5cm}
\textbf{Abstract}: In the literature, frames generated by unitary representations of groups (known as group-frames) are studied only for Hilbert spaces. We make first study of  frames for Banach spaces generated by isometric invertible representations of discrete groups on Banach spaces. These frames are  characterized using left regular, right regular, Gram-matrices and group-matrices on classical sequence spaces. A sufficiently large collection of functional-vector pairs using the double commutant of the representation is identified which generate group-frames for Banach spaces. Subsequently, we study Schauder frames generated by time-frequency shift operators on finite dimensional Banach spaces. We derive Moyal formula, fundamental identity of Gabor analysis, Wexler-Raz criterion and Ron-Shen duality in functional form.

\textbf{Keywords}: Frame, group,  representation. 

\textbf{Mathematics Subject Classification (2020)}:  42C15, 43A65, 46B04, 46A45, 46B45.
\vspace{0.5cm}
\hrule
\tableofcontents
\hrule 
\section{Introduction}
 In their \textit{Memoirs} `\textbf{Frames, Bases and Group Representations}' \cite{HANMEMOIRS},  Han and Larson initiated the study of Parseval frames for Hilbert spaces generated by abstract countable groups which  sheds light on the structure of unitary representations of the given group.  Recall that a countable group with discrete topology is known as a discrete group. The definition of Parseval frame  reads as follows.
\begin{definition}\cite{DUFFIN, HANMEMOIRS}\label{FRAMEDEF}
Let $G$ be a countable set. A sequence $\{\tau_g\}_{g\in G}$ in  a Hilbert space $\mathcal{H}$ is said to be a \textbf{Parseval frame} for $\mathcal{H}$ if 
\begin{align*}
\|h\|^2 = \sum_{g \in G} |\langle h, \tau_g\rangle|^2, \quad \forall h \in \mathcal{H}.
\end{align*}
\end{definition}
Definition \ref{FRAMEDEF}  is equivalent to the following equation: 

 \begin{align}\label{GENERALFOURIER}
 	h=\sum_{{g\in G}} \langle h, \tau_g\rangle \tau_g, \quad \forall h \in \mathcal{H}.
 \end{align}
One of the main objects in frame theory is to generate Parseval frames from a given vector. In this regard,   Han and Larson introduced the notion of frames generated by groups as follows \cite{HANMEMOIRS}.
\begin{definition} \cite{HANMEMOIRS}\label{GROUPDEFHILBERT}
Let $G$ be a discrete group and $\{\tau_g\}_{g\in G}$ be a Parseval frame for a  Hilbert space $\mathcal{H}$. The frame $\{\tau_g\}_{g\in G}$ is said to be a \textbf{group-frame} if there exists a unitary representation $\pi$ of $G$ on $\mathcal{H}$ and a vector $\tau\in \mathcal{H}$   such that 
\begin{align*}
	\tau_g =\pi_g \tau, \quad \forall g \in G.
\end{align*}
In this case, the representation $\pi$  is called as \textbf{frame representation} and $\tau$ is called as a \textbf{frame vector}.
\end{definition}
After the introduction of group-frames, several important results appeared  which can be classified according  to types of groups as follows. 
\begin{enumerate}
	\item Finite groups \cite{HANCLASSIFICATION, VALEWALDRON2010, VALEWALDRON2016, WALDRONHAY, VALEWALDRON, ELDARBOLCSKEI, WALDRONBOOK, KORNELSON, CASAZZABOOK, LIHAN}.
	\item Discrete groups \cite{ROYSLAND, TANGWEBER, ALDROUBILARSONTANGWEBER, BARBIERIHERNANDEZPARCET}.
	\item locally compact groups \cite{BOWNIKIVERSON2021, CHRISTENSENGOH}.
	\item compact groups \cite{IVERSON}.
	\item Lie groups \cite{OUSSA, OUSSA2}.
	\item ICC groups \cite{DUTKAYHANPICIOROAGA}.
\end{enumerate}
Similar to group-frames for Hilbert spaces, we can ask whether we can generate frames for Banach spaces using groups. In this paper, we are interested in frames for Banach spaces which are generated by discrete groups. Historically, it was Grochenig \cite{GROCHENIG} who introduced frames for Banach spaces known as Banach frames which do not demand the reconstruction of element using series similar to the one  given in  Equation (\ref{GENERALFOURIER}). In 1999, Casazza, Han, and Larson \cite{CASAZZAHANLARSON}   introduced the notion of unconditional Schauder frames (also known as framings) mainly taking the expansion property given in Equation (\ref{GENERALFOURIER}).
 Let $\mathcal{X}$ be a separable Banach space and $\mathcal{X}^*$ be its dual. 
\begin{definition}\cite{CASAZZAHANLARSON}\label{FRAMING}
Let $G$ be a discrete group.	Let $\{\tau_g\}_{g\in G}$ be a sequence in  $\mathcal{X}$ and 	$\{f_g\}_{g\in G}$ be a sequence in  $\mathcal{X}^*.$ The pair $(\{f_g\}_{g\in G}, \{\tau_g\}_{g\in G})$ is said to be an \textbf{unconditional  Schauder frame} (we write USF) for $\mathcal{X}$ if 
	\begin{align}\label{SCHAUDEREQUATION}
		x=\sum_{g\in G}
		f_g(x)\tau_g, \quad \forall x\in \mathcal{X},
	\end{align}
where the series  in    (\ref{SCHAUDEREQUATION}) converges unconditionally.
\end{definition} 	
In this paper, we make a first attempt to understand USF which are generated by  representations of discrete groups in Banach spaces. We organize the paper as follows. In Section \ref{2SECTION} we start by mentioning the notion of group representation in Banach spaces. Then we introduce the notion of group-frames (Definition \ref{GROUPFRAMEDEF}) and show that the notion reduces to Definition \ref{GROUPDEFHILBERT} for Hilbert spaces with unitary representations. Then we introduce a  subclass of group-frames for Banach spaces called a group-p-unconditional Schauder frame (group-p-USF) which factor through classical sequence spaces (Definition \ref{GROUPPUSF}).

Theorem \ref{LEFTREGULAR} shows  that representations giving group-p-USFs  can be obtained by restricting  the  standard left regular representation on sequence spaces. A connection between  group-p-USFs and group-matrices is derived in   Theorem \ref{GMTHEOREM}. We show in Theorem \ref{IFFGROUP}   group-p-USFs can be characterized using algebraic equations.  Theorem \ref{GRAMIFFGROUP} gives a characterization of group-p-USFs using standard left regular representation on sequence spaces. Proposition \ref{SINGLECOM} and 
Theorem \ref{DOUBLECOM} show that once a pair of functional and a vector which generate  group-p-USF is non-empty then the set of all such pairs is very large.

In Section \ref{SECTION3} we do finite dimensional Gabor analysis in function form without using the inner product. Even though it is true that finite dimensional Banach spaces can be made a Hilbert space, rather putting an inner product, we believe that it is best to study just by using functional and vectors. First important result of this section is  Banach space version of Moyal formula, derived in Theorem \ref{MT}. Using this result we get that we have a large supply of Gabor-Schauder  frames for finite dimensional Banach spaces (Corollary \ref{GSCOR}). Then we study Gabor-Schauder frames generated by subgroups of the group of all time-frequency shifts.

Theorem \ref{JANSSENTHEOREM} derives fundamental identity of Gabor analysis for Banach spaces. Wexler-Raz criterion for Gabor-Schauder frames  is derived in Theorem \ref{WEXLERRAZTHEOREM}. A partial Ron-Shen duality for Gabor-Schauder frames is derived in Theorem \ref{PARTIALRONSHEN}.

\section{Unconditional Schauder frames generated by groups}\label{2SECTION}
Throughout $G$ denotes a group (need not be abelian).  We denote the identity element of $G$ by $e$. Given a Banach space $\mathcal{X}$,  $\mathcal{II}(\mathcal{X})$ be the set of all invertible linear  isometries  on $\mathcal{X}$. The identity operator on $\mathcal{X}$ is denoted by $I_\mathcal{X}$.  We use the following definition of representation in Banach spaces throughout.
\begin{definition}
	Let $\mathcal{X}$   be a Banach space and $G$ be a topological group. A map    $\pi: G\to \mathcal{II}(\mathcal{X})$  is said to be an \textbf{invertible isometric  representation} of $G$ if the following condition hold:
	\begin{enumerate}[\upshape(i)]
		\item $\pi: G\to \mathcal{II}(\mathcal{X})$ is a group homomorphism,  i.e., 
		\begin{align*}
			\pi_g\pi _h=\pi_{gh}, \quad \forall g, h \in G.
		\end{align*}
		\item $\pi$ is continuous in the following sense. For each fixed $x\in \mathcal{X}$, the map 
		\begin{align*}
			G\ni g \mapsto \pi_g x \in \mathcal{X}
		\end{align*}
		is continuous.
	\end{enumerate}
\end{definition}
In this section,  we consider only discrete  groups. Hence the condition (ii) is always satisfied. Given two invertible isometric representations $\pi: G\to \mathcal{II}(\mathcal{X})$ and $ \Delta:G \to \mathcal{II}(\mathcal{Y})$, we say that they are equivalent if there is an invertible operator $T:\mathcal{X} \to \mathcal{Y}$ which intertwines $\pi$ and $\Delta$. If the intertwining operator is an invertible isometry, then we say that representations are invertible isometric  equivalent.
For any discrete  group $G$ and $p\in [1,\infty)$, we  note that $G$ always admit two invertible isometric  representations on $\ell^p(G)$ defined as follows. Let $\{\delta_g\}_{g\in G}$ be the standard Schauder basis  for $\ell^p(G)$. We denote the coordinate functionals associated to $\{\delta_g\}_{g\in G}$ by $\{\zeta_g\}_{g\in G}$.
\begin{enumerate}[\upshape(i)]
	\item \textbf{$p$-left regular representation}    $\lambda: G \to \mathcal{II}(\ell^p(G))$ defined on $\{\delta_g\}_{g\in G}$ and extended linearly as 
	\begin{align*}
		\lambda_g \delta_h\coloneqq \delta_{gh}, \quad \forall g, h \in G.
	\end{align*}
	\item \textbf{$p$-right regular representation}    $\rho: G \to \mathcal{II}(\ell^p(G))$ defined on $\{\delta_g\}_{g\in G}$ and extended linearly as 
	\begin{align*}
		\rho_g \delta_h\coloneqq \delta_{hg^{-1}}, \quad \forall g, h \in G.
	\end{align*}
\end{enumerate}
With these preliminaries, we now set the following definition.
\begin{definition}\label{GROUPFRAMEDEF}
	Let $G$ be a discrete group and $(\{f_g\}_{g\in G}, \{\tau_g\}_{g\in G})$ be an USF  for a Banach space  $\mathcal{X}$. The USF $(\{f_g\}_{g\in G}, \{\tau_g\}_{g\in G})$ is said to be a \textbf{group-USF} if there exist an invertible isometric representation $\pi$ of $G$ on $\mathcal{X}$,  a vector $\tau\in \mathcal{X}$ and a functional  $f\in \mathcal{X}^*$ such that 
	\begin{align}\label{GFE}
	f_g=f\pi_{g^{-1}}, \quad 	\tau_g =\pi_g \tau, \quad \forall g \in G.
	\end{align}
\end{definition}
We first show that Definition  \ref{GROUPFRAMEDEF} truly generalizes Definition \ref{GROUPDEFHILBERT}. Let $\{\tau_g\}_{g\in G}=\{\pi_g\tau\}_{g\in G}$ be a group-frame for a Hilbert space $\mathcal{H}$. Let $f$ be the functional on $\mathcal{H}$ defined by $\tau$, i.e., $f(h)=\langle h, \tau \rangle $, for all $h\in \mathcal{H}$. Then 
\begin{align*}
	f_g(h)=	f(\pi_{g^{-1}}h)=\langle \pi_{g^{-1}}h, \tau\rangle =\langle h, \pi_g \tau\rangle =\langle h, \tau_g \rangle, \quad \forall h \in \mathcal{H}.
\end{align*}
Therefore $f_g$ is determined by $\tau_g$ for all $g\in G$.\\
   It seems that we can not give a satisfactory theory for group-USF like that of group-frames for Hilbert spaces. Therefore we study a class of group-USF's defined as follows. Our definition is motivated from the notion of p-approximate Schauder frames defined in \cite{MAHESHJOHNSON}.
\begin{definition}\label{GROUPPUSF}
Let $p\in [1, \infty)$. 	A group-USF $(\{f_g\}_{g\in G}, \{\tau_g\}_{g\in G})$ for a Banach space $\mathcal{X}$ is said to be a \textbf{group-p-USF} if the following conditions hold.
\begin{enumerate}[\upshape(i)]
	\item The map (analysis operator) 
		$
	\theta_f: \mathcal{X} \ni x \mapsto \{f_g(x) \}_{g\in G} \in \ell^p(G)
	$
	is a well-defined isometry. 
	\item The map (synthesis operator)	$
	\theta_\tau: \ell^p(G) \ni \{a_g \}_{g\in G} \mapsto \sum_{g\in G} a_g\tau_g \in \mathcal{X}
	$ 
	is a well-defined bounded linear operator.
\end{enumerate}
In this case, the representation $\pi$  is called as \textbf{p-USF representation} and the pair  $(f, \tau)$ is called as a \textbf{p-USF  functional-vector}.
\end{definition} 
We also need  following generalization of Definition \ref{GROUPPUSF}.
\begin{definition}
	Let $p\in [1, \infty)$ and $G$ be a discrete group. 	A USF $(\{f_g\}_{g\in G}, \{\tau_g\}_{g\in G})$ for a Banach space $\mathcal{X}$ is said to be a \textbf{p-USF} if the following conditions hold.
	\begin{enumerate}[\upshape(i)]
		\item The map (analysis operator) 
		$
		\theta_f: \mathcal{X} \ni x \mapsto \{f_g(x) \}_{g\in G} \in \ell^p(G)
		$
		is a well-defined isometry. 
		\item The map (synthesis operator)	$
		\theta_\tau: \ell^p(G) \ni \{a_g \}_{g\in G} \mapsto \sum_{g\in G} a_g\tau_g \in \mathcal{X}
		$ 
		is a well-defined bounded linear operator.
	\end{enumerate}
\end{definition} 
We begin by recording a characterization result of p-USFs which is motivated from the characterization of Hilbert space frames by  Holub \cite{HOLUB}.
\begin{theorem}\label{THAFSCHAR}
	A pair  $(\{f_g\}_{g\in G}, \{\tau_g\}_{g\in G})$ is a p-USF   for 	$\mathcal{X}$, if and only if 
	\begin{align*}
		f_g=\zeta_g U, \quad \tau_g=V\delta_g, \quad \forall g \in G,
	\end{align*}  
	where $U:\mathcal{X} \rightarrow\ell^p(G)$, $ V: \ell^p(G)\to \mathcal{X}$ are bounded linear operators such that $VU=I_\mathcal{X}$ and $U$ is an isometry.
\end{theorem}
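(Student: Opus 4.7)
The plan is to set up a direct bijection between the pair $(\{f_g\}_{g\in G},\{\tau_g\}_{g\in G})$ and the operator pair $(U,V)$ via $U=\theta_f$ and $V=\theta_\tau$, and then check that all the listed conditions correspond under this identification. Both directions reduce to essentially the same computation carried out in opposite order.

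For the forward direction, I assume $(\{f_g\}_{g\in G},\{\tau_g\}_{g\in G})$ is a p-USF and take $U\coloneqq\theta_f$ and $V\coloneqq\theta_\tau$. Conditions (i) and (ii) in Definition \ref{GROUPPUSF} give precisely that $U$ is a well-defined bounded isometry and that $V$ is a well-defined bounded linear operator. The equalities $f_g=\zeta_g U$ and $\tau_g=V\delta_g$ are immediate from the definitions of $\theta_f$, $\theta_\tau$, and of the coordinate functionals $\zeta_g$: for $x\in \mathcal{X}$ one has $\zeta_g(Ux)=\zeta_g(\{f_h(x)\}_{h\in G})=f_g(x)$, and $V\delta_g=\theta_\tau(\delta_g)=\tau_g$. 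Finally, the USF reconstruction formula (\ref{SCHAUDEREQUATION}) translates into $VUx=\theta_\tau(\{f_g(x)\}_{g\in G})=\sum_{g\in G}f_g(x)\tau_g=x$, hence $VU=I_{\mathcal X}$.

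For the backward direction, given $U$ and $V$ with the stated properties, I define $f_g\coloneqq \zeta_g U\in \mathcal{X}^*$ and $\tau_g\coloneqq V\delta_g\in\mathcal{X}$. A direct computation shows $\theta_f=U$ and $\theta_\tau=V$, so conditions (i) and (ii) of Definition \ref{GROUPPUSF} are inherited from the hypotheses on $U$ and $V$. What remains is the unconditional series expansion $x=\sum_{g\in G}f_g(x)\tau_g$. For this I use that $\{\delta_g\}_{g\in G}$ is an unconditional Schauder basis of $\ell^p(G)$ (this is the one place where $p\in[1,\infty)$ is used), so that $Ux=\sum_{g\in G}\zeta_g(Ux)\delta_g$ converges unconditionally in $\ell^p(G)$; applying the bounded operator $V$ and using $VU=I_{\mathcal X}$ gives
\begin{equation*}
x=VUx=\sum_{g\in G}\zeta_g(Ux)\,V\delta_g=\sum_{g\in G}f_g(x)\tau_g,
\end{equation*}
with unconditional convergence preserved because continuous linear maps send unconditionally convergent series to unconditionally convergent series.

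There is no real obstacle here; the only subtle point is the transfer of unconditional convergence in the backward direction, which however is a standard consequence of the unconditionality of the canonical basis of $\ell^p(G)$ together with continuity of $V$. The result should therefore be viewed as a dictionary: p-USFs are exactly the factorisations $I_{\mathcal X}=VU$ through $\ell^p(G)$ with $U$ an isometry, which is the natural Banach-space analogue of Holub's characterisation in \cite{HOLUB}.
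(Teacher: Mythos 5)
Your proposal is correct and follows essentially the same route as the paper: identify $U=\theta_f$, $V=\theta_\tau$ in one direction and verify $f_g=\zeta_g U$, $\tau_g=V\delta_g$, $VU=I_{\mathcal X}$ in the other via the expansion $Ux=\sum_{g\in G}\zeta_g(Ux)\delta_g$. Your explicit remark that unconditional convergence transfers from the canonical basis of $\ell^p(G)$ through the bounded operator $V$ is a point the paper leaves implicit, but it is the same argument.
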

\begin{proof}
	$(\Leftarrow)$ Clearly $\theta_f$ and $\theta_\tau$ are bounded linear operators. Now let $x\in \mathcal{X}$. Then 
	\begin{align}\label{ORIGINALEQA}
		\sum_{{g\in G}}
		f_g(x)\tau_g=\sum_{{g\in G}} \zeta _g(Ux)V\delta _g=V\left(\sum_{{g\in G}} \zeta_g(Ux)\delta_g\right)=VUx=x.
	\end{align} 
	Note that, since $U$ is isometry, 
	\begin{align*}
		\|\theta_fx\|=\left\|\sum_{{g\in G}} f _g(x)\delta _g\right\|=\left\|\sum_{{g\in G}} \zeta _g(Ux)\delta _g\right\|=\|Ux\|=\|x\|, \quad \forall x \in \mathcal{X}.
	\end{align*}
	$(\Rightarrow)$ Define $U\coloneqq \theta_f$, $V\coloneqq \theta_\tau$. Then $\zeta_g(Ux)=\zeta_g(\theta_fx)=\zeta_g(\{f_k(x)\}_{k\in G})=f_g(x)$, $\forall x \in \mathcal{X}$, $V\delta_g=\theta_\tau \delta_g=\tau_g$, $\forall g \in G$ and $VU=\theta_\tau \theta_f=I_\mathcal{X}$. Since $\theta_f$ is an isometry, we also have $U$ is an isometry.
\end{proof}
We record the following important result from \cite{MAHESHJOHNSON}.
\begin{theorem}\cite{MAHESHJOHNSON}\label{OURS}
	Let  $(\{f_g\}_{g\in G}, \{\tau_g\}_{g\in G})$ be a p-USF for $\mathcal{X}$.  Then
	\begin{enumerate}[\upshape(i)]
		\item  $I_\mathcal{X}=\theta_\tau\theta_f.$
		\item  $G_{f, \tau}\coloneqq\theta_f\theta_\tau:\ell^p(G)\to \ell^p(G)$ is a projection onto   $\theta_f(\mathcal{X})$.
	\end{enumerate}
\end{theorem}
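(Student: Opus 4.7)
The plan is to read everything off the definitions of $\theta_f$ and $\theta_\tau$ together with the expansion identity guaranteed by the USF property. For (i), I would apply $\theta_\tau \theta_f$ to an arbitrary $x \in \mathcal{X}$ and compute directly: by definition $\theta_f(x) = \{f_g(x)\}_{g \in G}$, and then $\theta_\tau(\{f_g(x)\}_{g \in G}) = \sum_{g \in G} f_g(x)\tau_g$, which equals $x$ by the unconditional Schauder frame equation (\ref{SCHAUDEREQUATION}). Since $x$ was arbitrary, this yields $\theta_\tau \theta_f = I_\mathcal{X}$. Note that we do not need the isometry assumption for this part, only that both operators are well-defined and bounded so that the reconstruction series makes sense.

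For (ii), I would use part (i) to verify idempotence algebraically:
\begin{align*}
G_{f,\tau}^2 = (\theta_f \theta_\tau)(\theta_f \theta_\tau) = \theta_f (\theta_\tau \theta_f) \theta_\tau = \theta_f I_\mathcal{X} \theta_\tau = \theta_f \theta_\tau = G_{f,\tau}.
\end{align*}
Boundedness of $G_{f,\tau}$ is immediate from boundedness of $\theta_f$ and $\theta_\tau$, so $G_{f,\tau}$ is a bounded idempotent, hence a projection in the Banach-space sense.

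It remains to identify its range with $\theta_f(\mathcal{X})$. The inclusion $\mathrm{Range}(G_{f,\tau}) \subseteq \theta_f(\mathcal{X})$ is obvious since $G_{f,\tau} = \theta_f \circ \theta_\tau$ and $\theta_\tau$ maps into $\mathcal{X}$. For the reverse, given any $y = \theta_f(x) \in \theta_f(\mathcal{X})$, apply (i) to rewrite $x = \theta_\tau \theta_f(x) = \theta_\tau(y)$, whence $y = \theta_f(x) = \theta_f \theta_\tau(y) = G_{f,\tau}(y)$, showing $y \in \mathrm{Range}(G_{f,\tau})$.

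There is really no main obstacle here: the entire argument is a bookkeeping exercise once the operators are named, with the USF expansion property doing all the work in part (i) and the identity from (i) immediately forcing idempotence in (ii). The only mild subtlety is that the isometry hypothesis on $\theta_f$ is never invoked; it plays no role beyond ensuring $\theta_f$ is well-defined and injective in the ambient p-USF framework.
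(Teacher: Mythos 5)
Your argument is correct: part (i) is immediate from the USF expansion applied pointwise, and part (ii) follows by the algebraic idempotence computation together with the two range inclusions, exactly as you lay them out; your observation that the isometry hypothesis is not needed for either claim is also accurate. Note that the paper itself gives no proof of this statement --- it is recorded as a citation to an external reference --- so there is nothing to compare against beyond confirming that yours is the standard and expected argument.
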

In \cite{HANMEMOIRS}, Han and Larson showed that upto unitary operator,  frame representation is a piece of left regular  representation. With the help of following lemma we generalize their result for Banach spaces.
  \begin{lemma}\label{SUBLEMMA}
  	If $(\{f_g\}_{g\in G}, \{\tau_g\}_{g\in G})$ is a group-p-USF for $\mathcal{X}$, then range of its analysis operator is invariant under p-left-regular representation of $G$, i.e., $\lambda_g(\theta_f(\mathcal{X}))\subseteq \theta_f(\mathcal{X})$ for all $g\in G$.
  \end{lemma}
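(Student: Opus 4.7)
The plan is to show the stronger intertwining relation $\theta_f \pi_g = \lambda_g \theta_f$ for every $g\in G$, from which the invariance claim is immediate: for any $x\in \mathcal{X}$, $\lambda_g(\theta_f x) = \theta_f(\pi_g x) \in \theta_f(\mathcal{X})$.

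To establish the intertwining, I would first unpack the group-USF hypothesis via Equation (\ref{GFE}), which gives $f_h = f\pi_{h^{-1}}$ for all $h\in G$. Thus for any $x\in\mathcal{X}$ we can write, in $\ell^p(G)$,
\begin{align*}
\theta_f(x) = \sum_{h\in G} f(\pi_{h^{-1}}x)\,\delta_h.
\end{align*}
Applying $\lambda_g$ and using $\lambda_g\delta_h = \delta_{gh}$ together with continuity/linearity of $\lambda_g$ yields
\begin{align*}
\lambda_g\theta_f(x) = \sum_{h\in G} f(\pi_{h^{-1}}x)\,\delta_{gh}.
\end{align*}

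Next I would reindex by setting $k=gh$, i.e.\ $h = g^{-1}k$. Since the sum is over all of $G$, the reindexing is a bijection, and the series becomes
\begin{align*}
\lambda_g\theta_f(x) = \sum_{k\in G} f(\pi_{(g^{-1}k)^{-1}}x)\,\delta_k = \sum_{k\in G} f(\pi_{k^{-1}}\pi_g x)\,\delta_k = \sum_{k\in G} f_k(\pi_g x)\,\delta_k = \theta_f(\pi_g x),
\end{align*}
where I used the homomorphism property $\pi_{(g^{-1}k)^{-1}} = \pi_{k^{-1}g} = \pi_{k^{-1}}\pi_g$ and then reassembled $f_k = f\pi_{k^{-1}}$. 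This gives $\lambda_g\theta_f = \theta_f\pi_g$, and the inclusion $\lambda_g(\theta_f(\mathcal{X}))\subseteq \theta_f(\mathcal{X})$ follows since $\pi_g$ maps $\mathcal{X}$ into itself.

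There is no real obstacle here; the argument is a direct computation. The only point requiring mild care is justifying the reindexing inside the $\ell^p$-series, but this is immediate from the fact that $\theta_f(x)\in \ell^p(G)$ (guaranteed by Definition \ref{GROUPPUSF}(i)) and that $\lambda_g$ is a bounded (in fact isometric) operator on $\ell^p(G)$, so it commutes with the unconditionally convergent expansion in the basis $\{\delta_h\}_{h\in G}$.
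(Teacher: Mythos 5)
Your proposal is correct and follows essentially the same route as the paper: both establish the intertwining identity $\lambda_g\theta_f=\theta_f\pi_g$ by expanding $\theta_f x$ in the basis $\{\delta_h\}_{h\in G}$, reindexing the sum, and using $f_h=f\pi_{h^{-1}}$ together with the homomorphism property of $\pi$. Your added remark justifying the reindexing via unconditional convergence and boundedness of $\lambda_g$ is a harmless extra precision that the paper leaves implicit.
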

\begin{proof}
For any $g\in G$, 	
\begin{align*}
	\lambda_g \theta_f x&=\lambda_g\left(\sum_{h\in G}f_h(x)\delta_h\right)=\sum_{h\in G}f_h(x)\delta_{gh}=\sum_{u \in G}f_{g^{-1}u}(x)\delta_u\\
	&=\sum_{u \in G}f(\pi_{u^{-1}g}x)\delta_u
	=\sum_{u \in G}(f\pi_{u^{-1}})(\pi_gx)\delta_u=\theta_f(\pi_gx)\in \theta_f(\mathcal{X}), \quad \forall x \in \mathcal{X}.
\end{align*}
\end{proof}
  \begin{theorem}\label{LEFTREGULAR}
  	Every group-p-USF representation $\pi$ of $G$ is invertibly isometrically equivalent to a subrepresentation of  p-left-regular representation $\lambda$ of $G$.
  \end{theorem}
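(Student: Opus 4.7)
The plan is to use the analysis operator $\theta_f$ itself as the intertwining map. By Definition \ref{GROUPPUSF}(i), $\theta_f : \mathcal{X} \to \ell^p(G)$ is a bounded isometry, hence a bijective isometry onto its closed range $\mathcal{M} \coloneqq \theta_f(\mathcal{X}) \subseteq \ell^p(G)$. So one candidate for the invertible isometric equivalence is already handed to us; the only task is to verify that it intertwines $\pi$ with the restriction of $\lambda$ to $\mathcal{M}$.

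First I would invoke Lemma \ref{SUBLEMMA} to record that $\mathcal{M}$ is $\lambda$-invariant, so that the restrictions $\lambda_g|_{\mathcal{M}} : \mathcal{M} \to \mathcal{M}$ are well defined and assemble into a subrepresentation $\lambda|_{\mathcal{M}} : G \to \mathcal{II}(\mathcal{M})$ (each $\lambda_g|_{\mathcal{M}}$ is an invertible isometry because $\lambda_{g^{-1}}$ sends $\mathcal{M}$ back into $\mathcal{M}$ by the same lemma, providing an inverse, and restriction of an isometry is an isometry). Next, I would extract the intertwining identity from the computation already performed inside the proof of Lemma \ref{SUBLEMMA}: the displayed chain there terminates with $\lambda_g \theta_f x = \theta_f(\pi_g x)$, which is exactly the relation
\begin{equation*}
\theta_f \pi_g = \lambda_g \theta_f, \qquad \forall g \in G.
\end{equation*}

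From here the conclusion is essentially automatic. Viewing $\theta_f$ as an invertible isometry $\mathcal{X} \to \mathcal{M}$ and composing the intertwining identity on the left by $\theta_f^{-1}$ (which exists on $\mathcal{M}$), we obtain $\pi_g = \theta_f^{-1} (\lambda_g|_{\mathcal{M}}) \theta_f$ for every $g \in G$. This is precisely the statement that $\pi$ and $\lambda|_{\mathcal{M}}$ are invertibly isometrically equivalent in the sense defined earlier in the section.

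I do not anticipate a genuine obstacle: the heavy lifting — invariance of $\theta_f(\mathcal{X})$ under $\lambda$ together with the covariance calculation $\lambda_g \theta_f = \theta_f \pi_g$ — has already been done inside Lemma \ref{SUBLEMMA}, and the isometric character of $\theta_f$ is built into Definition \ref{GROUPPUSF}(i). The only mildly delicate point to articulate cleanly is that $\lambda_g|_{\mathcal{M}}$ is not merely a bounded operator on $\mathcal{M}$ but belongs to $\mathcal{II}(\mathcal{M})$, so the resulting subrepresentation actually takes values in the invertible isometry group required by the definition of a representation; this is handled by applying Lemma \ref{SUBLEMMA} simultaneously to $g$ and $g^{-1}$.
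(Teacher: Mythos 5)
Your proposal is correct and follows essentially the same route as the paper: both use $\theta_f$ as the invertible isometric intertwiner onto $\theta_f(\mathcal{X})$, invoke Lemma \ref{SUBLEMMA} for $\lambda$-invariance of the range, and read off the covariance identity $\lambda_g\theta_f=\theta_f\pi_g$ from that lemma's computation. Your extra remark that invertibility of $\lambda_g|_{\theta_f(\mathcal{X})}$ follows by applying the lemma to both $g$ and $g^{-1}$ is a detail the paper leaves implicit, but the argument is the same.
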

  \begin{proof}
 Let   $(\{f_g\}_{g\in G}, \{\tau_g\}_{g\in G})$ be  a group-p-USF for $\mathcal{X}$. Lemma \ref{SUBLEMMA} says that $\lambda_g(\theta_f(\mathcal{X}))\subseteq \theta_f(\mathcal{X})$ for all $g\in G$. Therefore the map 
 \begin{align*}
 	\Delta_g\coloneqq \lambda_g|_{\theta_f(\mathcal{X})}: \theta_f(\mathcal{X}) \to  \theta_f(\mathcal{X}), \quad \forall g \in G 
 \end{align*}
is a well-defined invertible isometric representation of $G$. We show that $ \Delta$ and $\pi$ are isometrically invertibly  equivalent. Note that $\theta_f: \mathcal{X} \to \theta_f(\mathcal{X})$ is an invertible isometry. We are done if we show that $\theta_f: \mathcal{X} \to \theta_f(\mathcal{X})$ intertwines $\Delta$ and $\pi$. This follows by doing a similar calculation as in the proof of  Lemma \ref{SUBLEMMA}.
  \end{proof}
Vale and Waldron discovered that for finite groups, groups-frames can be characterized using group-matrices \cite{VALEWALDRON}. We show that their result remains valid for Banach spaces. First we recall the definition of group-matrix.
\begin{definition}\cite{JOHNSONBOOK}
	Let $G$ be a discrete group. A matrix $A\coloneqq [a_{g,h}]_{g,h \in G}$ over $\mathbb{C}$ is said to be a \textbf{group-matrix} if there exists a function $\nu: G \to  \mathbb{C}$ such that 
	\begin{align}\label{GROUPMATRIX}
		a_{g,h}=\nu(g^{-1}h), \quad \forall g,h \in G.
	\end{align}
\end{definition}
Let $(\{f_g\}_{g \in G}, \{\tau_g\}_{g \in G})$   be  a group-p-USF for $\mathcal{X}$. Then we note that 
\begin{align*}
f_g(\tau_h)	=(f\pi_{g^{-1}})(\pi_h\tau)=f(\pi_{g^{-1}h}\tau), \quad \forall g, h \in G. 
\end{align*}
Let $G_{f,\tau}\coloneqq\theta_f\theta_\tau$ be the Gramian of $(\{f_g\}_{g\in G}, \{\tau_g\}_{g\in G})$ whose matrix w.r.t.  the standard Schauder  basis $\{\delta_g\}_{g\in G}$ for $\ell^p(G)$ is given by 
\begin{align*}
	[ f_g(\tau_h) ]_{g,h \in G}.
\end{align*}
Now by defining $  \nu: G  \ni g \mapsto  \nu(g)\coloneqq f(\pi_g\tau) \in \mathbb{C}$,  we see from Equation (\ref{GROUPMATRIX}) that $G_{f, \tau}$ is a group-matrix. Next theorem shows that converse of this also holds. 
  \begin{theorem}\label{GMTHEOREM}
   Let $G$ be a discrete group. Then a p-USF $(\{f_g\}_{g\in G}, \{\tau_g\}_{g\in G})$ for   $\mathcal{X}$	   is a group-p-USF  for $\mathcal{X}$ if and only if its Gramian $G_{f,\tau}$ is a group-matrix.	
  \end{theorem}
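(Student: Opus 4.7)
The forward implication is already established in the paragraph preceding the theorem, so the task is the converse. Assume the p-USF $(\{f_g\}_{g \in G}, \{\tau_g\}_{g \in G})$ has Gramian $P := G_{f,\tau} = \theta_f \theta_\tau$ with entries $P_{g,h} = \nu(g^{-1}h)$ for some $\nu : G \to \mathbb{C}$. By Theorem \ref{THAFSCHAR} I set $U := \theta_f$ and $V := \theta_\tau$, so $U : \mathcal{X} \to \ell^p(G)$ is an isometric embedding, $V : \ell^p(G) \to \mathcal{X}$ is bounded, $VU = I_\mathcal{X}$, and by Theorem \ref{OURS} the operator $P = UV$ is a bounded projection of $\ell^p(G)$ onto $U(\mathcal{X})$.

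The key step is to show that the group-matrix hypothesis on $P$ is equivalent to $P$ commuting with the p-left regular representation $\lambda$. This is a short matrix calculation: using $(\lambda_g)_{u,h} = \delta_{u, gh}$ I compute
\begin{align*}
(P\lambda_g)_{u,h} = \sum_{v \in G} \nu(u^{-1}v)\delta_{v, gh} = \nu(u^{-1}gh), \qquad
(\lambda_g P)_{u,h} = \sum_{v \in G} \delta_{u, gv}\nu(v^{-1}h) = \nu(u^{-1}gh),
\end{align*}
so $P\lambda_g = \lambda_g P$ for every $g \in G$. Consequently, $\lambda_g$ leaves $\mathrm{range}(P) = U(\mathcal{X})$ invariant, and the same holds for $\lambda_g^{-1} = \lambda_{g^{-1}}$. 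Thus each restriction $\lambda_g|_{U(\mathcal{X})}$ is an invertible isometry of the closed subspace $U(\mathcal{X})$.

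Viewing $U$ as an isometric isomorphism $\mathcal{X} \to U(\mathcal{X})$, I define
\begin{align*}
\pi_g := U^{-1}\lambda_g|_{U(\mathcal{X})}U : \mathcal{X} \to \mathcal{X}, \qquad g \in G.
\end{align*}
Since $\lambda_g U(\mathcal{X}) \subseteq U(\mathcal{X})$ the composite $UU^{-1}\lambda_h U = \lambda_h U$ simplifies, which gives $\pi_g \pi_h = U^{-1}\lambda_{gh}U = \pi_{gh}$; combined with isometry and invertibility of each $\lambda_g|_{U(\mathcal{X})}$, this makes $\pi$ an invertible isometric representation of $G$ on $\mathcal{X}$.

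Finally, I set $\tau := \tau_e$ and $f := f_e$ and verify the intertwining relations in (\ref{GFE}). Using $U\tau_e = UV\delta_e = P\delta_e$ and $P\lambda_g = \lambda_g P$,
\begin{align*}
\pi_g \tau_e = U^{-1}\lambda_g P\delta_e = U^{-1}P\delta_g = U^{-1}UV\delta_g = \tau_g.
\end{align*}
For the functionals, $f_e = \zeta_e U$ gives $f_e U^{-1}|_{U(\mathcal{X})} = \zeta_e|_{U(\mathcal{X})}$, hence for any $x \in \mathcal{X}$
\begin{align*}
f_e \pi_{g^{-1}}(x) = \zeta_e(\lambda_{g^{-1}}Ux) = \zeta_e\Big(\sum_{h \in G}\zeta_h(Ux)\delta_{g^{-1}h}\Big) = \zeta_g(Ux) = f_g(x),
\end{align*}
so $f_g = f \pi_{g^{-1}}$ for every $g \in G$. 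This produces the required representation-vector-functional triple and completes the converse. The main obstacle is the commutation computation that group-matrix structure of $P$ is precisely encoded as $P\lambda = \lambda P$; once that identity is in hand, the rest is a direct transport of the left regular representation to $\mathcal{X}$ via $U$.
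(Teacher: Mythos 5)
Your converse is correct, but it is organized quite differently from the paper's. The paper argues entirely with explicit series: it defines $\pi_g x \coloneqq \sum_{h}f_h(x)\tau_{gh}$ and a candidate inverse $\Delta_g x \coloneqq \sum_h f_{gh}(x)\tau_h$, and then verifies invertibility, the isometry property, the relations $\pi_g\tau_e=\tau_g$, $f_e\pi_{g^{-1}}=f_g$, and the homomorphism property one by one, each time invoking the shift--invariance identity $f_{gu}(\tau_{gh})=f_u(\tau_h)$ extracted from the group-matrix hypothesis. You instead isolate a single structural fact --- that the group-matrix condition on $P=\theta_f\theta_\tau$ forces $P\lambda_g=\lambda_g P$ --- and then let the operator framework do the work: $\mathrm{range}(P)=\theta_f(\mathcal{X})$ is $\lambda$-invariant (Theorem \ref{OURS}(ii)), so conjugating the restricted left regular representation by the isometric isomorphism $U=\theta_f$ immediately yields an invertible isometric representation, and the frame relations follow from two-line computations. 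Note that your $\pi_g=U^{-1}\lambda_g|_{U(\mathcal{X})}U$ coincides with $\theta_\tau\lambda_g\theta_f$ (since $\theta_\tau$ inverts $\theta_f$ on its range), which is exactly the representation the paper uses later in Theorem \ref{IFFGROUP}; your commutation identity is likewise the content of Equation (\ref{INBETWEEN}) and Theorem \ref{LEFTREGULARCOMMUTES}. What your route buys is economy and a transparent link to Theorem \ref{LEFTREGULAR} (frame representations as subrepresentations of $\lambda$); what the paper's route buys is self-containedness at this point in the exposition, since it does not presuppose the projection-commutation machinery. One small point of care your write-up handles implicitly: the matrix identity $P\lambda_g=\lambda_g P$ should be read as equality of the two bounded operators on all of $\ell^p(G)$, which follows because they agree on the Schauder basis $\{\delta_h\}_{h\in G}$.
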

  \begin{proof}
  	As we already derived  only if part, we prove if part. Assume that the Gramian $G_{f, \tau}$ is a group-matrix. Then there exists a function $\nu: G \to  \mathbb{C}$ such that 
  	\begin{align*}
  		\nu (g^{-1}h)=f_g(\tau_h), \quad \forall g, h \in G.
  	\end{align*}  	
  	Given $g\in G$, define 
  	\begin{align*}
  		\pi_g: \mathcal{X}\ni x \mapsto \pi_g x \coloneqq \sum_{h\in G}f_{h}(x) \tau_{gh} \in \mathcal{X}
  	\end{align*}
  	and 
  	\begin{align*}
  		\Delta_g: \mathcal{X}\ni x \mapsto \Delta_g x \coloneqq \sum_{h\in G}f_{gh}(x) \tau_{h} \in \mathcal{X}.
  	\end{align*}
  	Note that 
  	\begin{align}\label{GROUPEQUATION}
  		f_{gu} (\tau_{gh})=\nu((gu)^{-1}gh)=\nu(u^{-1}h)= f_h(\tau_u) , \quad \forall u,g,h \in G.
  	\end{align}
  	Using Equation (\ref{GROUPEQUATION}), we get 
  	\begin{align*}
  	\Delta_g\pi_g x &=\Delta_g \left(\sum_{h\in G}f_h(x) \tau_{gh} \right)=\sum_{h\in G}f_h(x) \Delta_g\tau_{gh}=\sum_{h\in G}f_h(x)\sum_{u\in G} f_{gu}(\tau_{gh}) \tau_u\\
  	&=\sum_{h\in G}f_h(x)\sum_{u\in G} f_{u}(\tau_{h}) \tau_u=\sum_{h\in G}f_h(x)\tau_h=x,\quad \forall x \in \mathcal{X}
  	\end{align*}
  and 
  	\begin{align*}
  		\pi_g\Delta_gx &=	\pi_g\left( \sum_{h\in G}f_{gh}(x) \tau_{h}\right)=\sum_{h\in G}f_{gh}(x)\pi_g \tau_{h}=\sum_{h\in G}f_{gh}(x)\sum_{u\in G} f_u(\tau_h) \tau_{gu}\\
  		&=\sum_{h\in G}f_{gh}(x)\sum_{u\in G} f_{gu}(\tau_{gh}) \tau_{gu}=\sum_{h\in G}f_{gh}(x) \tau_{gh}=x,\quad \forall x \in \mathcal{X}.
  	\end{align*}
  Therefore $\Delta_g$ is the inverse of $\pi_g$. We next  show $\pi_g$ is isometry.
  	For $x \in \mathcal{X}$, using Equation (\ref{GROUPEQUATION}) and $\theta_f$ is an isometry, 
  	\begin{align*}
  		\|\pi_gx\|^p&=\|\theta_f(\pi_gx)\|^p=\sum_{h\in G}|f_h(\pi_gx)|^p=\sum_{h\in G}\left|f_h\left(\sum_{u \in G}f_u(x)\tau_{gu}\right)\right|^p\\
  			&=\sum_{h\in G}\left|\sum_{u \in G}f_u(x)f_h(\tau_{gu})\right|^p=\sum_{h\in G}\left|\sum_{u \in G}f_u(x)f_{gg^{-1}h}(\tau_{gu})\right|^p\\
  			&=\sum_{h\in G}\left|\sum_{u \in G}f_u(x)f_{g^{-1}h}(\tau_{u})\right|^p=\sum_{h\in G}\left|f_{g^{-1}h}\left(\sum_{u \in G}f_u(x)\tau_{u}\right)\right|^p\\
  			&=\sum_{h\in G}|f_{g^{-1}h}(x)|^p=\|x\|^p.
  	\end{align*}
   To show Equation (\ref{GFE})  again using Equation (\ref{GROUPEQUATION}), 
   
   \begin{align*}
   	\pi_g\tau_e&=\sum_{h\in G} f_h(\tau_e) \tau_{gh}=\sum_{u\in G} f_{g^{-1}u}(\tau_e) \tau_{u}=\sum_{u\in G} f_{g^{-1}u}(\tau_{g^{-1}g}) \tau_{u}\\
   	&=\sum_{u\in G} f_{u}(\tau_g) \tau_{u}=\tau_g, \quad \forall g \in G
   \end{align*}
and
\begin{align*}
	(f_e\pi_{g^{-1}})(x)&=f_e\left(\sum_{h\in G}f_h(x)\tau_{g^{-1}h}\right)=\sum_{h\in G}f_h(x)f_e(\tau_{g^{-1}h})=\sum_{h\in G}f_h(x)f_{g^{-1}g}(\tau_{g^{-1}h})\\
	&=\sum_{h\in G}f_h(x)f_g(\tau_{h})=f_g\left(\sum_{h\in G}f_h(x)\tau_{h}\right)
	=f_g(x),  \quad \forall g \in G, \forall x \in \mathcal{X}.
\end{align*}
We are left with showing that $\pi$ is a homomorphism. Let $g,h \in G$ and  $x \in \mathcal{X}$. Then using Equation (\ref{GROUPEQUATION}),
\begin{align*}
\pi_g\pi_hx&=\sum_{u\in G}f_u(\pi_hx)\tau_{gu}=\sum_{u\in G}f_u\left(\sum_{v\in G}f_v(x)\tau_{hv}\right)\tau_{gu}\\
&=\sum_{u\in G}\sum_{v\in G}f_v(x)f_u(\tau_{hv})\tau_{gu}=\sum_{u\in G}\sum_{v\in G}f_v(x)f_{hh^{-1}u}(\tau_{hv})\tau_{gu}\\
&=\sum_{u\in G}\sum_{v\in G}f_v(x)f_{h^{-1}u}(\tau_{v})\tau_{gu}=\sum_{u\in G}f_{h^{-1}u}\left(\sum_{v\in G}f_v(x)\tau_{v}\right)\tau_{gu}\\
&=\sum_{u\in G}f_{h^{-1}u}(x)\tau_{gu}=\sum_{v\in G}f_{v}(x)\tau_{(gh)v}=\pi_{gh}x.
\end{align*} 
  \end{proof}
Kaftal, Larson and Zhang showed that group-frames can be characterized by using an algebraic equation and involving inner proved (actually they proved it in the setup of operator-valued frames) \cite{KAFTALLARSONZHANG}. In the following result we generalize the result of Kaftal, Larson and Zhang to Banach spaces.
\begin{theorem}\label{IFFGROUP}
	Let $G$ be a discrete group and $(\{f_g\}_{g\in G}, \{\tau_g\}_{g\in G})$ be a p-USF for   $\mathcal{X}$.	 Then there is an invertible isometric representation  $\pi$ of $G$ on $\mathcal{X}$ for which 
	\begin{align}\label{UNITARYEQ}
		\tau_g=\pi_g \tau_e, \quad 
		f_g=f_e \pi_{g^{-1}}, \quad \forall g \in G
	\end{align}
	if and only if 
	\begin{align}\label{INNEREQUALS}
	f_{ug}(\tau_{uh})=f_g(\tau_h), \quad \forall u,g,h \in G.
	\end{align}
	Moreover, the  representation can be defined as 
	\begin{align}\label{REPDEFINITION}
		\pi_g\coloneqq\theta_\tau \lambda_g \theta_f, \quad \forall g \in G.
	\end{align}
\end{theorem}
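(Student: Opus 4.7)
The plan is to treat the two directions asymmetrically: the forward implication is a one-line algebraic cancellation, while the reverse requires constructing the representation and verifying it is an invertible isometric homomorphism satisfying (\ref{UNITARYEQ}). I would build the reverse direction around one pivotal identity — the intertwining $\theta_f \pi_g = \lambda_g \theta_f$ — from which every other required property follows formally. For the forward direction, substituting $\tau_h = \pi_h \tau_e$ and $f_g = f_e \pi_{g^{-1}}$ and using that $\pi$ is a group homomorphism collapses $f_{ug}(\tau_{uh})$ to $f_e(\pi_{g^{-1}h}\tau_e)$, which is independent of $u$ and hence equals $f_g(\tau_h)$ (take $u = e$).

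For the reverse direction, I would define $\pi_g \coloneqq \theta_\tau \lambda_g \theta_f$ as instructed and first extract two specializations of (\ref{INNEREQUALS}) that drive the argument. Setting $u = g^{-1}$ (with appropriate renaming) yields $f_k(\tau_{gh}) = f_{g^{-1}k}(\tau_h)$, and setting $u = h^{-1}$ with trivial third slot yields $f_{h^{-1}k}(\tau_e) = f_k(\tau_h)$. Combining the first of these with Theorem \ref{OURS}(i) — namely $\theta_\tau \theta_f = I_\mathcal{X}$ — I would verify the intertwining coordinatewise:
\[
\zeta_k(\theta_f \pi_g x) = \sum_{h\in G} f_h(x)\, f_k(\tau_{gh}) = \sum_{h\in G} f_h(x)\, f_{g^{-1}k}(\tau_h) = f_{g^{-1}k}(x) = \zeta_k(\lambda_g \theta_f x).
\]

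With the intertwining in hand, the remaining verifications are mechanical. Isometry of $\pi_g$ follows from $\theta_f$ being an isometry and $\lambda_g$ being one: $\|\pi_g x\| = \|\theta_f \pi_g x\| = \|\lambda_g \theta_f x\| = \|x\|$. The homomorphism law $\pi_g \pi_h = \pi_{gh}$ follows from $\theta_f \pi_g \pi_h = \lambda_g \lambda_h \theta_f = \lambda_{gh} \theta_f = \theta_f \pi_{gh}$ together with injectivity of $\theta_f$, and invertibility follows because $\pi_e = \theta_\tau \theta_f = I_\mathcal{X}$. For (\ref{UNITARYEQ}), the relation $\pi_h \tau_e = \sum_k f_{h^{-1}k}(\tau_e) \tau_k = \sum_k f_k(\tau_h) \tau_k = \tau_h$ uses the second specialization and Theorem \ref{OURS}(i), while $f_e \pi_{g^{-1}} = f_g$ is just the $e$-th coordinate of the intertwining applied at $g^{-1}$. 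The only real obstacle is spotting the two correct specializations of (\ref{INNEREQUALS}) that bring the inner sums involving $f_k(\tau_{gh})$ and $f_{h^{-1}k}(\tau_e)$ into a form where $\theta_\tau \theta_f = I_\mathcal{X}$ can finish them off; once those substitutions are identified, the rest is symbol-pushing.
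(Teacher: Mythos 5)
Your proposal is correct, and it reorganizes the reverse direction around a different pivotal identity than the paper uses. The paper's proof also defines $\pi_g=\theta_\tau\lambda_g\theta_f$ and also extracts its leverage from Equation (\ref{INNEREQUALS}), but its key lemma is the commutation of the Gramian with the left regular representation, $\lambda_g\theta_f\theta_\tau=\theta_f\theta_\tau\lambda_g$ (verified on the basis vectors $\delta_h$), after which the isometry, the homomorphism law, the invertibility (via an explicitly exhibited inverse $\Delta_g=\theta_\tau\lambda_{g^{-1}}\theta_f$), and the relations $\pi_g\tau_e=\tau_g$, $f_e\pi_{g^{-1}}=f_g$ are each re-derived by separate multi-line coordinate computations. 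You instead take the intertwining $\theta_f\pi_g=\lambda_g\theta_f$ as the pivot (the two identities are interderivable given $\theta_\tau\theta_f=I_\mathcal{X}$), and then everything downstream becomes formal: isometry transfers from $\lambda_g$ through the isometry $\theta_f$, the homomorphism law follows from injectivity of $\theta_f$, and invertibility is free from $\pi_e=I_\mathcal{X}$ once the homomorphism law is in hand. Your specializations of (\ref{INNEREQUALS}) — $f_k(\tau_{gh})=f_{g^{-1}k}(\tau_h)$ and $f_{h^{-1}k}(\tau_e)=f_k(\tau_h)$ — check out, and the appeal to $\theta_\tau\theta_f=I_\mathcal{X}$ to collapse the inner sums is exactly how the paper closes its own computations. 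What your route buys is economy and a cleaner conceptual statement (the analysis operator intertwines $\pi$ with $\lambda$, which is also the content of Lemma \ref{SUBLEMMA} and Theorem \ref{LEFTREGULAR}); what the paper's route buys is that its key identity is stated purely in terms of the Gramian, which it then records separately as Theorem \ref{LEFTREGULARCOMMUTES} and reuses in Theorem \ref{GRAMIFFGROUP}.
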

\begin{proof}
	$(\Rightarrow)$ Let $ u,g,h \in G$. Then  we have 
	\begin{align*}
		f_{ug}(\tau_{uh})&=(f_e \pi_{(ug)^{-1}})(\pi_{uh} \tau_e)=(f_e \pi_{g^{-1}u^{-1}})(\pi_{uh} \tau_e)=(f_e\pi_{g^{-1}})(\pi_{h} \tau_e)=
f_g(\tau_h).
	\end{align*}
	$(\Leftarrow)$ Let $\pi_g$ be defined by Equation (\ref{REPDEFINITION}). We are required to show that   $\pi$ is an invertible isometric  representation and satisfies Equation (\ref{UNITARYEQ}). We first show that it satisfies Equation (\ref{UNITARYEQ}). To do so, we claim  the identity 
	\begin{align}\label{INBETWEEN}
		\lambda_g \theta_f \theta_\tau= \theta_f\theta_\tau\lambda_g,\quad \forall g \in G.
	\end{align}
Using Equation (\ref{INNEREQUALS})	we  verify Equation (\ref{INBETWEEN}). Note that it suffices to verify  Equation (\ref{INBETWEEN}) at the standard basis vectors $\delta_h$, $h\in G$.   Consider 
\begin{align*}
\theta_f\theta_\tau\lambda_g \delta_h&=\theta_f\theta_\tau \delta_{gh}=\theta_f\tau_{gh}=\sum_{u\in G} f_u(\tau_{gh})\delta_u=\sum_{u\in G} f_{gu}(\tau_{gh})\delta_{gu}=\sum_{u\in G} f_u(\tau_{h})\delta_{gu}\\
&=\sum_{u\in G} f_u(\tau_{gh})\lambda_g\delta_u=\lambda_g\left(\sum_{u\in G} f_u(\tau_{h})\delta_u\right)
=\lambda_g\theta_f\tau_h=\lambda_g \theta_f \theta_\tau\delta_h.
\end{align*}
Now using Equation (\ref{INBETWEEN})
	\begin{align*}
		\pi_g \tau_e=\theta_\tau \lambda_g \theta_f \tau_e =\theta_\tau \lambda_g \theta_f \theta_\tau \delta_e=\theta_\tau \theta_f \theta_\tau \lambda_g\delta_e=I_\mathcal{X}\theta_\tau \lambda_g\delta_e=\theta_\tau\delta_g=\tau_g, \quad \forall g \in G
			\end{align*}
		and 
		\begin{align*}
		f_e(\pi_{g^{-1}}x)&=f_e(\theta_\tau \lambda_{g^{-1}} \theta_fx)=f_e\left(\sum_{u\in G}f_u(x)\theta_\tau\lambda_{g^{-1}}\delta_u\right)=f_e\left(\sum_{u\in G}f_u(x)\tau_{g^{-1}u}\right)\\
		&=\sum_{u\in G}f_u(x)f_e(\tau_{g^{-1}u})
		=\sum_{u\in G}f_u(x)f_{g^{-1}g}(\tau_{g^{-1}u})=\sum_{u\in G}f_u(x)f_g(\tau_{u})\\
		&=f_g\left(\sum_{u\in G}f_u(x)\tau_u\right)=f_g(x), \quad \forall x \in \mathcal{X},  \forall g \in G.
	\end{align*}
	Now we show that $\pi$ is an invertible isometric representation. First we need to show that it is bijective. We note that, for $g\in G$, the operator $\Delta_g\coloneqq\theta_\tau \lambda_{g^{-1}} \theta_f$ is the inverse of $\pi_g$. In fact, using Equation (\ref{INBETWEEN}),
	\begin{align*}
		&\pi_g\Delta_g=\theta_\tau \lambda_{g} \theta_f\theta_\tau \lambda_{g^{-1}} \theta_f=\theta_\tau \theta_f\theta_\tau \lambda_g\lambda_{g^{-1}} \theta_f=I_\mathcal{X},\\
		&\Delta_g\pi_g=\theta_\tau \lambda_{g^{-1}} \theta_f\theta_\tau \lambda_{g} \theta_f=\theta_\tau \lambda_{g^{-1}}\lambda_g \theta_f\theta_\tau  \theta_f=I_\mathcal{X}.
	\end{align*}
	To show  $\pi_g$ is an isometry,  given  $x\in \mathcal{X}$,
		\begin{align*}
		\|\pi_gx\|^p&=\sum_{h\in G}|f_h(\pi_gx)|^p=\sum_{h\in G}|f_h(\theta_\tau\lambda_g\theta_fx)|^p=\sum_{h\in G}\left|(f_h\theta_\tau \lambda_g)\left(\sum_{u \in G}f_u(x)\delta_{u}\right)\right|^p\\
		&=\sum_{h\in G}\left|f_h\left(\sum_{u \in G}f_u(x)\theta_\tau \lambda_g \delta_u\right)\right|^p=\sum_{h\in G}\left|f_h\left(\sum_{u \in G}f_u(x)\tau_{gu}\right)\right|^p\\
		&=\sum_{h\in G}\left|\sum_{u \in G}f_u(x)f_h(\tau_{gu})\right|^p=\sum_{h\in G}\left|\sum_{u \in G}f_u(x)f_{gg^{-1}h}(\tau_{gu})\right|^p\\
		&=\sum_{h\in G}\left|\sum_{u \in G}f_u(x)f_{g^{-1}h}(\tau_{u})\right|^p=\sum_{h\in G}\left|f_{g^{-1}h}\left(\sum_{u \in G}f_u(x)\tau_{u}\right)\right|^p\\
		&=\sum_{h\in G}|f_{g^{-1}h}(x)|^p=\|x\|^p.
	\end{align*}
Finally, we  show that $\pi$ is representation: 
\begin{align*}
\pi_g\pi_h& =\theta_\tau \lambda_{g} \theta_f\theta_\tau \lambda_{h} \theta_f=\theta_\tau  \theta_f\theta_\tau \lambda_{g}\lambda_h \theta_f	
=I_\mathcal{X}\theta_\tau \lambda_{gh} \theta_f=\pi_{gh}, \quad \forall g, h \in G.
\end{align*}
\end{proof}
A careful observation on proof of ``if" part of Theorem \ref{IFFGROUP}   gives the following result. 
\begin{theorem}\label{LEFTREGULARCOMMUTES}
	Let $G$ be a discrete group and $(\{f_g\}_{g\in G}, \{\tau_g\}_{g\in G})$ be a group-p-USF for   $\mathcal{X}$. Then $	\lambda_g \theta_f \theta_\tau= \theta_f\theta_\tau \lambda_g,$ $ \forall g \in G.$
\end{theorem}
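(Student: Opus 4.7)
The plan is to reduce the statement directly to a computation already performed inside the proof of Theorem~\ref{IFFGROUP}. The key observation is that a group-p-USF automatically satisfies the hypothesis \eqref{INNEREQUALS} of Theorem~\ref{IFFGROUP}, so the intermediate identity \eqref{INBETWEEN} established there is exactly what we want.

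First I would verify that any group-p-USF satisfies \eqref{INNEREQUALS}. By Definition~\ref{GROUPFRAMEDEF} there exist an invertible isometric representation $\pi$ of $G$, a vector $\tau \in \mathcal{X}$ and a functional $f\in \mathcal{X}^*$ with $f_g = f\pi_{g^{-1}}$ and $\tau_g = \pi_g\tau$ for all $g\in G$. For any $u,g,h\in G$ one then computes
\begin{align*}
 f_{ug}(\tau_{uh}) = (f\pi_{(ug)^{-1}})(\pi_{uh}\tau) = f(\pi_{g^{-1}u^{-1}}\pi_{uh}\tau) = f(\pi_{g^{-1}h}\tau) = (f\pi_{g^{-1}})(\pi_h\tau) = f_g(\tau_h),
\end{align*}
using only the homomorphism property of $\pi$. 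Hence \eqref{INNEREQUALS} holds.

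Next I would rerun the argument that establishes \eqref{INBETWEEN} in the proof of Theorem~\ref{IFFGROUP}. It suffices to evaluate both sides on a standard basis vector $\delta_h$. Writing $\theta_\tau\delta_h=\tau_h$ and expanding $\theta_f\tau_{gh}=\sum_{u\in G}f_u(\tau_{gh})\delta_u$, a change of summation variable $u\mapsto gu$ together with \eqref{INNEREQUALS} lets us replace $f_{gu}(\tau_{gh})$ by $f_u(\tau_h)$, after which the sum factors through $\lambda_g$. This is precisely the computation
\begin{align*}
 \theta_f\theta_\tau\lambda_g\delta_h = \sum_{u\in G}f_{gu}(\tau_{gh})\delta_{gu} = \sum_{u\in G}f_u(\tau_h)\delta_{gu} = \lambda_g\theta_f\theta_\tau\delta_h
\end{align*}
carried out in Theorem~\ref{IFFGROUP}. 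Since the standard basis is total in $\ell^p(G)$ and both sides are bounded linear operators, the identity extends to all of $\ell^p(G)$, yielding $\lambda_g\theta_f\theta_\tau=\theta_f\theta_\tau\lambda_g$.

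There is essentially no obstacle: the content of the theorem is that the hypothesis of Theorem~\ref{IFFGROUP} is automatic for group-p-USFs, and the commutation relation is extracted as a by-product of the construction of the representation $\pi$ via \eqref{REPDEFINITION}. The only care needed is to note that boundedness of $\theta_f$ and $\theta_\tau$ allows the pointwise check on the basis $\{\delta_h\}_{h\in G}$ to upgrade to an operator identity on $\ell^p(G)$.
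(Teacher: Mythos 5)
Your proposal is correct and follows essentially the same route as the paper: the paper proves this theorem by remarking that it is extracted from the ``if'' part of Theorem~\ref{IFFGROUP}, namely the verification of the identity \eqref{INBETWEEN} on the basis vectors $\delta_h$ using \eqref{INNEREQUALS}, which (as you check via the homomorphism property of $\pi$) holds automatically for any group-p-USF. Your added care about upgrading the pointwise check on $\{\delta_h\}_{h\in G}$ to an operator identity via boundedness is a reasonable explicit touch but does not change the argument.
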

Next we try to relate frame representation with p-right regular representation. This result was first derived by Mendez, Bodmann, Baker, Bullock and McLaney in the context of binary frame \cite{MENDEZBODMANNBAKERBULLOCKMCLANEY}.
\begin{theorem}\label{GRAMIFFGROUP}
	Let $G$ be a discrete group and $(\{f_g\}_{g\in G}, \{\tau_g\}_{g\in G})$ be a p-USF for   $\mathcal{X}$.  Then there is an invertible isometric representation  $\pi$ of $G$ on $\mathcal{X}$ for which 
	\begin{align*}
		\tau_g=\pi_g \tau_e,  \quad 
		f_g=f_e \pi_{g^{-1}}, \quad \forall g \in G
	\end{align*}
	if and only if 
	\begin{align*}
	\theta_f\theta_\tau	\{a_h\}_{h\in G}=\sum_{g\in G}\eta(g)\rho_g \{a_h\}_{h\in G}, \quad \forall \{a_h\}_{h\in G} \in \ell^p(G),
	\end{align*}
	where 
	\begin{align}\label{GRAMIANGROUP}
		\eta: G \ni g \mapsto \eta(g)\coloneqq f_e(\pi_{g}\tau_e) \in \mathbb{C}.
	\end{align}
	Moreover, the invertible isometric  representation can be defined as 
	\begin{align}\label{GRAMREP}
		\pi_g\coloneqq\theta_\tau \lambda_g \theta_f, \quad \forall g \in G.
	\end{align}
\end{theorem}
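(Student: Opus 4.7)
The plan is to reduce this characterization to the algebraic criterion of Theorem \ref{IFFGROUP} by translating the right-regular expansion of the Gramian into the shift-invariance identity $f_{ug}(\tau_{uh}) = f_g(\tau_h)$. The key observation is that the Gramian entry $f_u(\tau_h)$ depends only on the product $u^{-1}h$ precisely when $\theta_f\theta_\tau$ can be written as a weighted sum of right translations. Since the ``moreover'' part already matches the formula from Theorem \ref{IFFGROUP}, once the equivalence of conditions is established, the representation formula $\pi_g=\theta_\tau\lambda_g\theta_f$ comes for free.

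For the forward direction, I would first compute $f_g(\tau_h) = (f_e\pi_{g^{-1}})(\pi_h\tau_e) = f_e(\pi_{g^{-1}h}\tau_e) = \eta(g^{-1}h)$, confirming that the Gramian matrix entries are $\eta(g^{-1}h)$. Then I would verify the operator identity on the standard basis vectors $\delta_h$: on one side, $\theta_f\theta_\tau\delta_h = \theta_f\tau_h = \sum_u f_u(\tau_h)\delta_u = \sum_u \eta(u^{-1}h)\delta_u$; on the other, $\sum_g\eta(g)\rho_g\delta_h = \sum_g\eta(g)\delta_{hg^{-1}}$, and the change of variable $u=hg^{-1}$ (so that $g=u^{-1}h$) turns this into the same expression. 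The identity then extends by boundedness and linearity to all of $\ell^p(G)$.

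For the backward direction, suppose $\theta_f\theta_\tau = \sum_g\eta(g)\rho_g$ for some function $\eta$. Evaluating at $\delta_h$ and matching coefficients yields $f_u(\tau_h) = \eta(u^{-1}h)$ for all $u,h\in G$. Consequently, for arbitrary $u, g, h \in G$, we get $f_{ug}(\tau_{uh}) = \eta((ug)^{-1}uh) = \eta(g^{-1}h) = f_g(\tau_h)$, which is exactly the algebraic hypothesis of Theorem \ref{IFFGROUP}. Applying that theorem produces the invertible isometric representation $\pi$ together with the explicit formula $\pi_g=\theta_\tau\lambda_g\theta_f$, and the consistency $\eta(g) = f_e(\pi_g\tau_e) = f_e(\tau_g)$ is automatic. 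The main subtlety to handle is the meaning of the operator series $\sum_g\eta(g)\rho_g$: it is interpreted as pointwise unconditional convergence in $\ell^p(G)$, which fits naturally into the USF framework since $\theta_f\theta_\tau$ is a bounded projection on $\ell^p(G)$ by Theorem \ref{OURS} and the underlying series in the definition of a USF is unconditionally convergent.
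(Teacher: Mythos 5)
Your proposal is correct and follows essentially the same route as the paper: the forward direction is the same basis-vector computation with the change of variable $u=hg^{-1}$ turning $\sum_u\eta(u^{-1}h)\delta_u$ into $\sum_g\eta(g)\rho_g\delta_h$, and the backward direction extracts $f_{ug}(\tau_{uh})=f_g(\tau_h)$ from the right-regular expansion and then delegates to Theorem \ref{IFFGROUP}, exactly as the paper does.
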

\begin{proof}
	$(\Rightarrow)$	Let $\eta $ be the function defined in Equation (\ref{GRAMIANGROUP}). Now for each $\delta_h$, 
	\begin{align*}
	\theta_f \theta_\tau \delta_h&=\theta_f\tau_h=\sum_{u\in G}f_u(\tau_h)\delta_u=\sum_{u\in G}(f\pi_{u^{-1}})(\pi_h\tau)\delta_u\\
	&=\sum_{u\in G}f(\pi_{u^{-1}h}\tau)\delta_u=\sum_{g\in G}f(\pi_{g}\tau)\delta_{hg^{-1}}=\sum_{g\in G}f(\pi_{g}\tau)\rho_g\delta_{h}\\
	&=\left(\sum_{g\in G}f(\pi_{g}\tau)\rho_g\right)\delta_{h}=\left(\sum_{g\in G}\eta(g)\rho_g\right)\delta_h.
	\end{align*}
		$(\Leftarrow)$  	 We   note that p-left and p-right regular representations commute. In fact, 	for any $g, h \in G$ and for each standard basis vector $\delta_u$, we have 
	\begin{align*}
		\lambda_g\rho_ h \delta_u=\lambda_g\delta_{uh^{-1}}=\delta_{guh^{-1}}=\rho_h\delta_{gu}=\rho_h\lambda_g\delta_{u}.
	\end{align*}
	We  then get 
	\begin{align*}
		\lambda_g \theta_f \theta_\tau \delta_h&=\lambda_g\left(\sum_{u \in G}\eta(u) \rho_u \delta_h\right)=\sum_{u \in G}\eta(u) \lambda_g\rho_u\delta_h	\\
		&=\left(\sum_{u \in G}\eta(u) \rho_u\right) \lambda_g\delta_h =\theta_f \theta_\tau\lambda_g \delta_h, \quad \forall g \in G.
	\end{align*}
Now we claim the following:  
\begin{align*}
	f_{ug}(\tau_{uh})=f_g(\tau_h), \quad \forall u,g,h \in G.
\end{align*}
Consider 
\begin{align*}
	f_{ug}(\tau_{uh})&=f_{ug}(\theta_\tau\delta_{uh})=\zeta_{ug}(\theta_f\theta_\tau\delta_{uh})=\zeta_{ug}\left(\sum_{v\in G}\eta(v)\rho_v \delta_{uh}\right)\\
	&=\sum_{v\in G}\eta(v)\zeta_{ug}\rho_v \delta_{uh}=\sum_{v\in G}\eta(v)\zeta_{ug} \delta_{uhv^{-1}}=\eta(g^{-1}h)=f_g(\tau_h), \quad \forall u,g,h \in G.
\end{align*}
Hence claim holds. 	For each $g\in G$, we now define $\pi_g$ as in Equation (\ref{GRAMREP}). Now by doing a similar calculation as in the converse part of proof of Theorem \ref{IFFGROUP} we get that $(\{f_g\}_{g\in G}, \{\tau_g\}_{g\in G})$ is a   group-p-USF for   $\mathcal{X}$. 
\end{proof}
After giving several characterizations for frame representations, we next seek to determine the collection of functionals and vectors  which generate group-frames. In the case of Hilbert spaces, Han and Larson completely characterized vectors which generate group-frames using double commutant of image of representation \cite{HANMEMOIRS}. Even though we are unable to achieve this, we show that certain large sets generate group-frames for Banach spaces. We first set some notations. Let $G$ be a discrete group and $\pi:G\to \mathcal{II}(\mathcal{X})$ be a invertible isometric  representation. Assume that there is a vector $\tau\in \mathcal{X}$ and a functional $f \in \mathcal{X}^*$ such that $(\{f_g\}_{g \in G},\{\tau_g\}_{g \in G})$   is a group-p-USF for $\mathcal{X}$. Define the set of all \textbf{group-p-USF  vectors} as 
\begin{align*}
	\mathscr{F}_G(\pi, \mathcal{X})\coloneqq \{(f_1, \tau_1)  \in \mathcal{X}^*\times \mathcal{X}:  (\{{f_1}_g\}_{g \in G}, \{{\tau_1}_g\}_{g \in G}) \text{ is a group-p-USF  for } \mathcal{X}\}. 
\end{align*}
By assumption $	\mathscr{F}_G(\pi, \mathcal{X})\neq \emptyset$. We naturally ask what is the structure of $	\mathscr{F}_G(\pi, \mathcal{X})$? In the following proposition we show that this set is quite large.  We use the following notation in sequel. Given a subset $\mathcal{A}$ of linear operators on $\mathcal{X}$, by $\mathbb{I}^*(\mathcal{A})$ we mean the set of invertible isometric  operators $U:\mathcal{X}\to \mathcal{X}$ such that $U \in \mathcal{A}$. Given $f\in \mathcal{X}^*$ and $\tau \in \mathcal{X}$, we define 
\begin{align*}
&f[\mathbb{I}^*(\mathcal{A})^{-1},\mathbb{I}^*(\mathcal{A})]\tau \coloneqq \{(fU^{-1}, U\tau):U\in \mathbb{I}^*(\mathcal{A})\}.	
\end{align*}
\begin{proposition}\label{SINGLECOM}
	If an invertible isometric  representation $\pi:G\to \mathcal{II}(\mathcal{X})$ admits a functional-frame vector $(f, \tau)$, then
	\begin{align}\label{COMMUATNTINEQUALITY}
f[\mathbb{I}^*(\pi(G)')^{-1},\mathbb{I}^*(\pi(G)')]\tau	\subseteq  \mathscr{F}_G(\pi, \mathcal{X}).
	\end{align}
\end{proposition}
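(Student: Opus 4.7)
The plan is to take an arbitrary $U \in \mathbb{I}^*(\pi(G)')$ and verify directly that the pair $(fU^{-1}, U\tau)$ satisfies all the requirements of a group-p-USF with respect to the same representation $\pi$. The critical structural input is that since $U$ commutes with every $\pi_g$, so does $U^{-1}$; once this is observed, the argument is a straightforward intertwining computation, so I do not anticipate a genuine obstacle beyond bookkeeping.

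First I would set $f' := fU^{-1}$ and $\tau' := U\tau$ and compute the associated sequences, using commutativity with $\pi(G)$:
\begin{align*}
f'_g = (fU^{-1})\pi_{g^{-1}} = f\pi_{g^{-1}}U^{-1} = f_g U^{-1}, \qquad \tau'_g = \pi_g(U\tau) = U\pi_g\tau = U\tau_g.
\end{align*}
Next, for the unconditional reconstruction identity, I would use continuity of $U$ to pull it through the (unconditionally convergent) defining sum:
\begin{align*}
\sum_{g\in G} f'_g(x)\tau'_g = \sum_{g \in G} f_g(U^{-1}x)\, U\tau_g = U\!\left(\sum_{g\in G} f_g(U^{-1}x)\tau_g\right) = U(U^{-1}x) = x.
\end{align*}
Continuity of $U$ also preserves unconditional convergence, so $(\{f'_g\}_{g\in G}, \{\tau'_g\}_{g\in G})$ is a USF.

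To obtain the p-USF conditions, I would observe the operator factorizations
\begin{align*}
\theta_{f'} = \theta_f \circ U^{-1}, \qquad \theta_{\tau'} = U \circ \theta_\tau.
\end{align*}
Since $U \in \mathbb{I}^*(\pi(G)')$, the operator $U^{-1}$ is an invertible isometry, so $\theta_{f'}$ is a composition of two isometries and hence an isometry into $\ell^p(G)$; and $\theta_{\tau'}$ is a composition of two bounded operators and hence bounded. Finally, the group relations (\ref{GFE}) for the new pair hold by construction with the \emph{same} representation $\pi$ playing the role of the p-USF representation, so $(f',\tau') \in \mathscr{F}_G(\pi,\mathcal{X})$. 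Since $U \in \mathbb{I}^*(\pi(G)')$ was arbitrary, the inclusion (\ref{COMMUATNTINEQUALITY}) follows.
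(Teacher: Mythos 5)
Your proof is correct and follows essentially the same route as the paper's: write the new pair as $(fU^{-1}, U\tau)$, use that $U$ (and hence $U^{-1}$) commutes with every $\pi_g$, and pull $U$ through the unconditionally convergent reconstruction sum to get $UU^{-1}x = x$. You are in fact slightly more complete than the paper, which only verifies the reconstruction identity and leaves the p-USF conditions implicit; your factorizations $\theta_{f'}=\theta_f U^{-1}$ and $\theta_{\tau'}=U\theta_\tau$ supply that extra bookkeeping.
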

\begin{proof}
	Let $(f_1, \tau_1)\in f[\mathbb{I}^*(\pi(G)')^{-1},\mathbb{I}^*(\pi(G)')]\tau$. Then $f_1=fU^{-1}$, $\tau_1=U\tau$ for some invertible isometry  $U:\mathcal{X}\to \mathcal{X}$ such that $U\in \mathbb{I}^*(\pi(G)')$. Define ${f_2}_g\coloneqq f_1 \pi_{g^{-1}}$ and  ${\tau_2}_g\coloneqq \pi_g\tau_1$, for all $g\in G$. Now we see that 
	\begin{align*}
		\sum_{g \in G}{f_2}_g(x){\tau_2}_g&=\sum_{g \in G}(f_1\pi_{g^{-1}})(x)\pi_g\tau_1=\sum_{g \in G}(fU^{-1})	(\pi_{g^{-1}}x)	\pi_g U\tau\\
		&=\sum_{g \in G}(f\pi_{g^{-1}})(U^{-1}x)	U\pi_g \tau=U\left(\sum_{g \in G}(f\pi_{g^{-1}})(U^{-1}x)\pi_g \tau\right)\\
		&=U\left(\sum_{g \in G}f_g(U^{-1}x)\tau_g\right)
		=UU^{-1}x=x, \quad \forall x\in \mathcal{X}.
	\end{align*}
	Therefore  $(\{{f_2}_g\}_{g \in G}, \{{\tau_2}_g\}_{g \in G})$  is a group-p-USF   for $\mathcal{X}$ and consequently $ (f_1, \tau_1)\in \mathscr{F}_G(\pi, \mathcal{X})$.
\end{proof}
We next try to show $	\mathscr{F}_G(\pi, \mathcal{X})$ has another large set inside it. For this, we need following two results.  
\begin{theorem}\label{COMMUTATIONTHEOREM}
	For any group $G$, and any $1\leq  p<\infty$,
	\begin{align*}
		\lambda(G)'=\rho(G)''\quad 	\text{ and }\quad \rho(G)'=\lambda(G)''.
	\end{align*}	
\end{theorem}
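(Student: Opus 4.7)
The plan is to reduce the claim to a matrix-coefficient computation on $\ell^p(G)$. First I would extract the two easy inclusions from the commutation $\lambda_g\rho_h = \rho_h\lambda_g$ noted in the proof of Theorem \ref{GRAMIFFGROUP}. This gives $\rho(G)\subseteq\lambda(G)'$, so by the order-reversing property of commutants one has $\lambda(G)''\subseteq\rho(G)'$, and a second application together with the identity $A''' = A'$ produces $\rho(G)''\subseteq\lambda(G)'$. The symmetric argument gives the other easy inclusion.

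For the hard direction, I would pin down elements of each commutant through their action on $\delta_e$. If $T \in B(\ell^p(G))$ commutes with every $\lambda_g$, setting $\eta \coloneqq T\delta_e$ yields $T\delta_h = T\lambda_h\delta_e = \lambda_h\eta$, giving the matrix entries $T(j,h) \coloneqq \zeta_j(T\delta_h) = \eta(h^{-1}j)$. Dually, if $S \in B(\ell^p(G))$ commutes with every $\rho_g$ and $\mu \coloneqq S\delta_e$, then $S\delta_h = \rho_{h^{-1}}\mu$ and $S(j,h) = \mu(jh^{-1})$. Since $\{\delta_g\}_{g\in G}$ is an unconditional Schauder basis of $\ell^p(G)$ and the coordinate functionals $\zeta_j$ are continuous, the matrix of a composition of such operators is given by the usual matrix product, so
\[(TS)(j,k) = \sum_{\ell\in G}\eta(\ell^{-1}j)\,\mu(\ell k^{-1}), \qquad (ST)(j,k) = \sum_{\ell\in G}\mu(j\ell^{-1})\,\eta(k^{-1}\ell),\]
both series being absolutely convergent.

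The decisive step is a change of variable. In the first sum I would substitute $m = \ell^{-1}j$ and in the second $m = k^{-1}\ell$; both are bijections of $G$, and both expressions reduce to $\sum_{m\in G}\eta(m)\mu(jm^{-1}k^{-1})$. Hence $TS = ST$ for every $T\in\lambda(G)'$ and every $S\in\rho(G)'$, which shows $\lambda(G)'\subseteq\rho(G)''$ and, by swapping the roles of $\lambda$ and $\rho$, also $\rho(G)'\subseteq\lambda(G)''$. Combined with the easy inclusions, this closes the proof.

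The main technical point I expect to verify is the matrix-product formula for a composition in $B(\ell^p(G))$ when $p \neq 2$, where we cannot lean on adjoints or inner products. This is handled purely by continuity: $S\delta_k = \sum_\ell S(\ell,k)\delta_\ell$ converges unconditionally in $\ell^p$, $T$ is bounded, and $\zeta_j$ is continuous, which together give $(TS\delta_k)(j) = \sum_\ell S(\ell,k)T(j,\ell)$ as an unconditionally --- and therefore absolutely --- convergent numerical series. Once this is in hand, the combinatorial reindexing above is unproblematic and the argument is uniform in $p \in [1,\infty)$.
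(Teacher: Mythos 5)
Your proposal is correct and follows essentially the same route as the paper: both obtain the easy inclusions from $\lambda_g\rho_h=\rho_h\lambda_g$ and commutant order-reversal, and both prove the hard inclusion by observing that any $T\in\lambda(G)'$ (resp.\ $S\in\rho(G)'$) is determined by $T\delta_e$ (resp.\ $S\delta_e$) via $T\delta_h=\lambda_h T\delta_e$ and $S\delta_h=\rho_{h^{-1}}S\delta_e$, after which $TS=ST$ reduces to the same double-sum reindexing. The paper expands $TS\delta_h$ and $ST\delta_h$ directly rather than through matrix coefficients, but your continuity/unconditionality justification of the matrix-product formula is sound and the computations coincide.
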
  
\begin{proof}
	We already know  from the proof of Theorem \ref{GRAMIFFGROUP} that p-left and p-right regular representations commute, i.e., 
	\begin{align*}
		\lambda_g\rho_ h =\rho_h\lambda_g, \quad \forall g, h \in G.
	\end{align*}
	Hence  for every $g\in G$, $\lambda_g\in  \rho(G)'$. By varying $g$, we get $\lambda(G)\subseteq \rho(G)'$. Taking commutant yield $\lambda(G)'\supseteq \rho(G)''$. Now we prove the reverse inclusion. Let $ T\in \lambda(G)'$. To show $T\in  \rho(G)''$ we need to show that $TS=ST$ for all $S\in  \rho(G)'$. So let $S\in  \rho(G)'$. Note that to verify $TS=ST$, it suffices to verify $TS\delta_h=ST\delta_h$ for all $h \in G$. Let $h \in G$. Then 
		\begin{align*}
		TS\delta_h&=TS \rho_{h^{-1}}\delta_e=T \rho_{h^{-1}}S\delta_e=T \rho_{h^{-1}}\left(\sum_{g \in G}\zeta_g(S\delta_e) \delta_g\right)\\
		&=\sum_{g \in G}\zeta_g(S\delta_e) T \rho_{h^{-1}}\delta_g=\sum_{g \in G}\zeta_g(S\delta_e) T \delta_{gh}=\sum_{g \in G}\zeta_g(S\delta_e) T \lambda_{gh}\delta_{e}\\
		&=\sum_{g \in G}\zeta_g(S\delta_e) \lambda_{gh}T\delta_{e}=\sum_{g \in G}\zeta_g(S\delta_e) \lambda_{gh}\left(\sum_{u \in G} \zeta_u(T\delta_e) \delta_u\right)\\
		&=\sum_{g \in G}\zeta_g(S\delta_e) \sum_{u \in G}\zeta_u(T\delta_e) \lambda_{gh}\delta_u=\sum_{g \in G}\sum_{u \in G}\zeta_g(S\delta_e)\zeta_u(T\delta_e)\delta_{ghu}
	\end{align*}
	and 
	\begin{align*}
		ST\delta_h&=ST\lambda_h\delta_e	=S\lambda_hT\delta_e=S\lambda_h\left(\sum_{u \in G}\zeta_u(T\delta_e) \delta _u\right)	\\
		&=\sum_{u \in G}\zeta_u(T\delta_e) S\lambda_h\delta _u=\sum_{u \in G}\zeta_u(T\delta_e) S\delta _{hu}=\sum_{u \in G}\zeta_u(T\delta_e) S\rho _{(hu)^{-1}}\delta_e\\
		&=\sum_{u \in G} \zeta_u(T\delta_e) \rho _{(hu)^{-1}}S\delta_e=\sum_{u \in G}\zeta_u(T\delta_e) \rho _{(hu)^{-1}}\left(\sum_{g\in G}\zeta_g(S\delta_e) \delta_g\right)\\
		&=\sum_{u \in G}\zeta_u(T\delta_e)\sum_{g\in G}\zeta_g(S\delta_e)  \rho _{(hu)^{-1}}\delta_g=\sum_{u \in G}\sum_{g \in G}\zeta_u(T\delta_e)\zeta_g(S\delta_e) \delta_{ghu}.
	\end{align*}
	Therefore $\lambda(G)'=\rho(G)''$. Finally $\lambda(G)''=\rho(G)'''=\rho(G)'$.
\end{proof}  
\begin{theorem}\label{JAJTHEOREM}
	For any discrete group $G$ and for every $1\leq p<\infty$, the map 
	\begin{align*}
		\Phi: \rho(G)'' \ni A \mapsto \Phi(A)\coloneqq JAJ\in \lambda(G)''
	\end{align*}
	is an algebra isomorphism, where 
	\begin{align*}
		J: \ell^p(G) \ni \{a_g\}_{g\in G}  \mapsto J \{a_g\}_{g\in G} \coloneqq \sum_{g \in G}a_g\delta_{g^{-1}}\in \ell^p(G).
	\end{align*}
	Moreover, we have the following. 
	\begin{enumerate}[\upshape(i)]
		\item If $U\in \rho(G)''$ is invertible (resp. isometry), then $\Phi(U)$ is invertible (resp. isometry).
	\item If $V\in \lambda(G)''$ is invertible (resp. isometry), then  $\Phi^{-1}(V)$ is invertible (resp. isometry).
	\end{enumerate}
\end{theorem}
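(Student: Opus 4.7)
The plan hinges on three easy properties of $J$: (a) $J$ is an isometric involution on $\ell^p(G)$, i.e.\ $\|J\{a_g\}\|=\|\{a_g\}\|$ and $J^2=I_{\ell^p(G)}$ (both follow by direct inspection on the standard basis); (b) conjugation by $J$ swaps the two regular representations, namely $J\lambda_g J=\rho_g$ and $J\rho_g J=\lambda_g$ for all $g\in G$; and (c) because of (a), conjugation by $J$ is already known to be a $*$-flip algebraically, with $\Phi(A)\Phi(B)=JAJJBJ=JABJ=\Phi(AB)$. I would begin by proving (b) by evaluating on the basis vector $\delta_h$: $J\lambda_gJ\delta_h=J\lambda_g\delta_{h^{-1}}=J\delta_{gh^{-1}}=\delta_{hg^{-1}}=\rho_g\delta_h$, and similarly in the other direction.

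Next I would show that $\Phi$ actually lands in $\lambda(G)''$. Take $A\in\rho(G)''$ and an arbitrary $B\in\lambda(G)'$. Conjugation by $J$ carries $\lambda(G)'$ into $\rho(G)'$: indeed, for each $g\in G$, $(JBJ)\rho_g=JB(J\rho_gJ)J=JB\lambda_gJ=J\lambda_gBJ=(J\lambda_gJ)(JBJ)=\rho_g(JBJ)$, using (b) and $J^2=I$. Hence $JBJ\in\rho(G)'$, so $A$ commutes with $JBJ$; conjugating both sides by $J$ gives $(JAJ)B=B(JAJ)$, which shows $\Phi(A)\in\lambda(G)''$. By symmetry, the map $\Psi:\lambda(G)''\to\rho(G)''$, $\Psi(B)\coloneqq JBJ$, is well-defined, and $J^2=I$ gives $\Phi\circ\Psi=\mathrm{id}_{\lambda(G)''}$, $\Psi\circ\Phi=\mathrm{id}_{\rho(G)''}$. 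Combined with additivity (immediate) and multiplicativity (from (c)), this proves $\Phi$ is an algebra isomorphism with inverse $\Phi^{-1}=\Psi$, given by the same formula.

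For the ``moreover'' clauses I would just exploit (a). If $U\in\rho(G)''$ is invertible, then $\Phi(U)\Phi(U^{-1})=\Phi(UU^{-1})=\Phi(I)=I$ and similarly on the other side, so $\Phi(U)$ is invertible with inverse $\Phi(U^{-1})$. If $U$ is an isometry, then for every $x\in\ell^p(G)$,
\begin{align*}
\|\Phi(U)x\|=\|JUJx\|=\|UJx\|=\|Jx\|=\|x\|,
\end{align*}
using (a) twice and the isometry of $U$ once. The analogous arguments with $\Phi^{-1}=\Psi$ yield the corresponding statement in the other direction.

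I do not anticipate any serious obstacle: the whole argument is a tidy consequence of the fact that $J$ is an isometric involution intertwining $\lambda$ and $\rho$. The only point worth a little care is the commutant computation in the second paragraph, where one must remember that $J^2=I$ lets us shuttle elements freely between $\lambda(G)'$ and $\rho(G)'$.
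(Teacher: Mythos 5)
Your proof is correct, and it takes a somewhat different route from the paper's. The paper first invokes Theorem \ref{COMMUTATIONTHEOREM} to convert the target condition $JAJ\in\lambda(G)''$ into the single-commutant condition $JAJ\in\rho(G)'$, and also to replace the hypothesis $A\in\rho(G)''$ by $A\in\lambda(G)'$; it then verifies $JAJ\rho_g\delta_h=\rho_g JAJ\delta_h$ by expanding both sides on the standard basis. You instead isolate the intertwining relations $J\lambda_gJ=\rho_g$, $J\rho_gJ=\lambda_g$ as an explicit lemma and run a purely formal bicommutant argument: conjugation by $J$ carries $\lambda(G)'$ into $\rho(G)'$, so any $A\in\rho(G)''$ commutes with $JBJ$ for $B\in\lambda(G)'$, and conjugating back gives $\Phi(A)\in\lambda(G)''$. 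This makes your argument independent of the commutation theorem $\lambda(G)'=\rho(G)''$ (which the paper's proof quietly relies on) and shows the statement is really just a consequence of $J$ being an involution intertwining the two representations; the price is that you must carry out the (easy) extra verification that $J$ maps $\lambda(G)'$ into $\rho(G)'$. You also spell out the ``moreover'' clauses (preservation of invertibility via $\Phi(U^{-1})$, and of the isometry property via the fact that $J$ is a coordinate permutation, hence isometric on $\ell^p(G)$), which the paper leaves implicit; those arguments are correct as written.
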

\begin{proof}
	We first note that $J$ is  an isomorphism and $J^2=I_{\ell^p(G)}$. Before showing $\Phi$ is an isomorphism, we need to show that it is well-defined. Let $A\in  \rho(G)''$. We try to show that $JAJ\in \lambda(G)''$ which says $\Phi$ is well-defined. By Theorem  \ref{COMMUTATIONTHEOREM}, showing $JAJ\in \lambda(G)''$ is same as showing $JAJ\in \rho(G)'$.  Let $g\in G$ be arbitrary. We claim that $JAJ\rho_g=\rho_gJAJ$. Since $\{\delta_h\}_{h\in G}$ is a basis for $\ell^p(G)$ it suffices to show $JAJ\rho_g\delta_h=\rho_gJAJ\delta_h$, for all $h \in G$. Now noting $A \in \lambda(G)'$, we get 
	
	\begin{align*}
		JAJ\rho_g\delta_h&=JAJ\delta_{hg^{-1}}=JA\delta_{gh^{-1}}=JA\lambda_g \delta_{h^{-1}}=J\lambda_gA \delta_{h^{-1}}\\
		&=J\lambda_g\left(\sum_{u\in G}\zeta_u (A\delta_{h^{-1}}) \delta_u\right)=\sum_{u\in G}\zeta_u (A\delta_{h^{-1}}) J\lambda_g\delta_u\\
		&=\sum_{u\in G}\zeta_u (A\delta_{h^{-1}}) J\delta_{gu}=\sum_{u\in G}\zeta_u (A\delta_{h^{-1}}) \delta_{u^{-1}g^{-1}}
	\end{align*}
	and 
	
	\begin{align*}
		\rho_gJAJ\delta_h&=	\rho_gJA\delta_{h^{-1}}=\rho_gJ\left(\sum_{u\in G}\zeta_u (A\delta_{h^{-1}}) \delta_u\right)=\sum_{u\in G}\zeta_u (A\delta_{h^{-1}}) \rho_gJ\delta_u
		\\
		&=\sum_{u\in G}\zeta_u (A\delta_{h^{-1}}) \rho_g\delta_{u^{-1}}=\sum_{u\in G}\zeta_u (A\delta_{h^{-1}}) \delta_{u^{-1}g^{-1}}.
	\end{align*}
	Clearly $\Phi$ is linear. Since $J^2=I_{\ell^p(G)}$, $\Phi$ is multiplicative. Through a direct calculation we see that inverse of $\Phi$ is the map $\lambda(G)'' \ni B \mapsto  JBJ\in \rho(G)''$.
\end{proof}
Following is the most important result for generators of group-frames for Banach spaces.
\begin{theorem}\label{DOUBLECOM}
	Let $\pi:G\to \mathcal{II}(\mathcal{X})$ be a invertible isometric  representation of a group $G$ 	which admits a functional-vector $(f,\tau)$. Then 
	\begin{align}\label{DCINEQUALITY}
f[\mathbb{I}^*(\pi(G)'')^{-1},\mathbb{I}^*(\pi(G)'')]\tau	\subseteq  \mathscr{F}_G(\pi, \mathcal{X}).
	\end{align}
\end{theorem}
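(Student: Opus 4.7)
My plan is to show that for $U\in\mathbb{I}^*(\pi(G)'')$ the pair $(f_1,\tau_1):=(fU^{-1},U\tau)$, together with the derived sequences $f_{1,g}:=f_1\pi_{g^{-1}}$ and $\tau_{1,g}:=\pi_g\tau_1$, is a p-USF for $\mathcal{X}$. Once that is established the group-p-USF conclusion is immediate: the Gramian entries
\[
f_{1,g}(\tau_{1,h}) \;=\; (f_1\pi_{g^{-1}})(\pi_h\tau_1) \;=\; f_1(\pi_{g^{-1}h}\tau_1)
\]
depend only on $g^{-1}h$, so $G_{f_1,\tau_1}$ is a group-matrix and Theorem \ref{GMTHEOREM} upgrades the p-USF to a group-p-USF for the given representation $\pi$.

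The structural input I would exploit to verify the three p-USF axioms is the identification
\[
\pi(G)' \;=\; \theta_\tau\,\lambda(G)'\,\theta_f .
\]
The inclusion $\supseteq$ follows from the intertwinings $\theta_f\pi_g=\lambda_g\theta_f$ and $\pi_g\theta_\tau=\theta_\tau\lambda_g$, while $\subseteq$ uses the factorization $I_\mathcal{X}=\theta_\tau\theta_f$ of Theorem \ref{OURS} to rewrite every $T\in\pi(G)'$ as $\theta_\tau(\theta_fT\theta_\tau)\theta_f$ with $\theta_fT\theta_\tau\in\lambda(G)'$. Hence $U\in\pi(G)''$ forces
\[
U\,\theta_\tau A\theta_f \;=\; \theta_\tau A\theta_f\,U \qquad \forall\, A\in\lambda(G)'. \qquad(\ast)
\]
Specializing $A$ to the right-translations $\rho_g$ (which lie in $\lambda(G)'=\rho(G)''$ by Theorem \ref{COMMUTATIONTHEOREM}) and to the Gramian projection $G_{f,\tau}\in\lambda(G)'$ supplied by Theorem \ref{LEFTREGULARCOMMUTES}, I would assemble the resulting relations and sum them against $\{f_g(x)\}_{g\in G}$, invoking the original reconstruction $\sum_g f_g(x)\tau_g=x$. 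This should yield the three required p-USF facts
\[
\|\theta_{f_1}x\|_{\ell^p(G)}=\|x\|,\qquad \theta_{\tau_1}\text{ is bounded,}\qquad \theta_{\tau_1}\theta_{f_1}=I_\mathcal{X}.
\]

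The principal obstacle is the gap in strength between the single- and double-commutant hypotheses. In Proposition \ref{SINGLECOM} one has the identity $U\pi_g=\pi_gU$ on the nose, so the pull-through $(fU^{-1})\pi_{g^{-1}}=f\pi_{g^{-1}}U^{-1}$ that drives the entire calculation is immediate. Under the weaker assumption $U\in\pi(G)''$ this pull-through fails pointwise and can only be recovered indirectly, through the commutation $(\ast)$ with elements of the single commutant $\pi(G)'$. The natural device to carry out that recovery is to route the argument through the sequence space $\ell^p(G)$ via $\theta_f,\theta_\tau$, using the $\lambda$--$\rho$ commutation relations of Theorem \ref{COMMUTATIONTHEOREM} and the $J$-based algebra isomorphism $\Phi:\rho(G)''\to\lambda(G)''$ of Theorem \ref{JAJTHEOREM}, which rewrites the double-commutant condition on the $\mathcal{X}$-side as a pure left-regular commutation statement on the $\ell^p(G)$-side. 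Executing this bookkeeping carefully, so that the isometry and reconstruction identities fall out of $(\ast)$, is the technical heart of the argument; once in place, the Gramian observation of the first paragraph and Theorem \ref{GMTHEOREM} close the proof.
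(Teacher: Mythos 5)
Your setup is sound: the identification $\pi(G)'=\theta_\tau\,\lambda(G)'\,\theta_f$ is correct (and is stated more cleanly than anything in the paper), the relation $(\ast)$ does follow from $U\in\pi(G)''$, and you have correctly located the obstacle, namely that the pull-through $(fU^{-1})\pi_{g^{-1}}=f\pi_{g^{-1}}U^{-1}$ is unavailable. The problem is that you stop exactly at that obstacle. The step ``assemble the resulting relations and sum them against $\{f_g(x)\}_{g\in G}$ \dots\ this should yield the three required p-USF facts'' is the entire content of the theorem, and the specializations you propose do not deliver it. Writing $\widehat{U}:=\theta_fU\theta_\tau$ and $\widehat{U^{-1}}:=\theta_fU^{-1}\theta_\tau$, one finds $\tau_{1,g}=\theta_\tau\lambda_g\widehat{U}\delta_e$ and $f_{1,g}=\zeta_e\widehat{U^{-1}}\lambda_{g^{-1}}\theta_f$, so the reconstruction sum collapses to $\theta_\tau\widehat{U}\widehat{U^{-1}}\theta_fx=x$ provided $\widehat{U}$ and $\widehat{U^{-1}}$ commute with each $\lambda_g$ on $\theta_f(\mathcal{X})$ --- and that commutation is \emph{equivalent} to $U\in\pi(G)'$, i.e.\ to the hypothesis of Proposition \ref{SINGLECOM}. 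Taking $A=\rho_g$ or $A=G_{f,\tau}$ in $(\ast)$ only gives relations of the form $U\theta_\tau\rho_g\theta_f=\theta_\tau\rho_g\theta_fU$ (and the vacuous $U\,I_{\mathcal X}=I_{\mathcal X}\,U$, since $\theta_\tau G_{f,\tau}\theta_f=I_{\mathcal X}$); these do not recombine into the commutation you need, and no amount of summing against $\{f_g(x)\}$ changes that.

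The ingredient you are missing is the paper's first move, which your outline never makes: conjugate everything by the analysis operator using Theorem \ref{LEFTREGULAR}, so that without loss of generality $\mathcal{X}=\theta_f(\mathcal{X})$, $\pi_g=\lambda_g|_{\theta_f(\mathcal{X})}$, $\tau=\theta_f\theta_\tau\delta_e$ and $f=\zeta_e\theta_f\theta_\tau$. Only after this reduction does the paper invoke Theorems \ref{COMMUTATIONTHEOREM} and \ref{JAJTHEOREM} --- not to manufacture commutation relations with $\rho_g$, but to argue that an invertible isometry in the \emph{double} commutant of the restricted left-regular representation may be assumed to lie in its \emph{single} commutant; after that, the computation is the one from Proposition \ref{SINGLECOM} (together with $\theta_f\theta_\tau U=U$ on $\theta_f(\mathcal{X})$). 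That passage from double to single commutant is precisely the ``technical heart'' you defer, and your outline supplies no mechanism for it: citing $\Phi=J(\cdot)J$ does not help as stated, because $\Phi$ replaces $U$ by $JUJ$ and therefore changes the pair $(fU^{-1},U\tau)$ whose frame property is to be proved. As written, your argument establishes only the single-commutant case already covered by Proposition \ref{SINGLECOM}.
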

\begin{proof}
	Let $g\in G$ and define  $\Delta_g \coloneqq \lambda_g|_{{\theta_f(\mathcal{X})}}$. Theorem \ref{LEFTREGULAR} says that $\pi$ is invertible isometrically  equivalent to the representation $\Delta$ with functional-vector $(\zeta_e(\theta_f\theta_\tau), \theta_f\theta_\tau\delta_e)$.  Therefore, without loss of generality we may assume that 
	\begin{align*}
		\mathcal{X}= \theta_f(\mathcal{X}), \quad \pi=\Delta, \quad 	G=\{\lambda _g|{_{\theta_f(\mathcal{X})}}: g \in G\}, \quad \tau=\theta_f \theta_\tau \delta_e, \quad f=\zeta_e(\theta_f\theta_\tau).
	\end{align*}
	Let $U:\theta_f(\mathcal{X})\to \theta_f(\mathcal{X})$ be an invertible isometry  such that $U\in 	\mathbb{I}^*(\pi(G)'')$. We need to show that $(fU^{-1}, U\tau)\in\mathscr{F}_G(\pi, \mathcal{X})$. Define $\tilde{\tau}\coloneqq U\tau$,   $\tilde{\tau_g}\coloneqq \pi_g \tilde{\tau}$ and $\tilde{f}\coloneqq fU^{-1}$, $\tilde{f_g}\coloneqq \tilde{f}\pi_{g^{-1}}$ for all $g \in G$.   To prove the theorem, now it suffices to show that $(\{\tilde{f_g}\}_{g \in G}, \{\tilde{\tau_g}\}_{g \in G})$   is an  group-p-USF  for $\mathcal{X}$. Let $x=\theta_fy\in \theta_f(\mathcal{X})$. Using  Theorem \ref{COMMUTATIONTHEOREM} and   Theorem \ref{JAJTHEOREM}, we may assume that $U\in \{\lambda _g|{_{\theta_f(\mathcal{X})}}: g \in G\}'$. Since $\theta_f\theta_\tau z=z$ for all $z \in  \theta_f(\mathcal{X})$ and $U:\theta_f(\mathcal{X})\to \theta_f(\mathcal{X})$, we then have  $\theta_f \theta_\tau  U= U$. 
 Therefore
	\begin{align*}
		\sum_{{g\in G}}\tilde{f_g}(x)\tilde{\tau_g}&=\sum_{{g\in G}}(\tilde{f}\pi_{g^{-1}})(x)\pi_g\tilde{\tau}=\sum_{{g\in G}}((fU^{-1})\pi_{g^{-1}})(x)\pi_gU\tau\\
		&=\sum_{{g\in G}}f(U^{-1}\pi_{g^{-1}}x)\pi_gU\tau=\sum_{{g\in G}}(\zeta_e\theta_f\theta_\tau)(U^{-1}\pi_{g^{-1}}x)\pi_gU\theta_f \theta_\tau \delta_e\\
		&=\sum_{{g\in G}}\zeta_e(\theta_f\theta_\tau U^{-1}\pi_{g^{-1}}x)\pi_gU\theta_f \theta_\tau \delta_e=\sum_{{g\in G}}\zeta_e(\theta_f\theta_\tau U^{-1}\lambda_{g^{-1}}x)\lambda_gU\theta_f \theta_\tau \delta_e\\
		&=\sum_{{g\in G}}\zeta_e(\theta_f\theta_\tau \lambda_{g^{-1}}U^{-1}x)U\lambda_g\theta_f \theta_\tau \delta_e=\sum_{{g\in G}}\zeta_e(\lambda_{g^{-1}}\theta_f\theta_\tau U^{-1}x)U\theta_f \theta_\tau \lambda_g\delta_e\\
		&=\sum_{{g\in G}}\zeta_g(\theta_f\theta_\tau U^{-1}x)U\theta_f \theta_\tau \delta_g=U\theta_f \theta_\tau\theta_f\theta_\tau U^{-1}x\\
		&=U\theta_f \theta_\tau U^{-1}x=UU^{-1}x=x.
	\end{align*}
	\end{proof}
Han and Larson   characterized (with an equality) frame vectors for Hilbert spaces (see Theorem 6.17 in \cite{HANMEMOIRS}). Later Kaftal, Larson and Zhang noticed that the set of all frame vectors is path-connected in norm topology (see Theorem 8.1 in \cite{KAFTALLARSONZHANG}). Based on these, we raise the following questions.
\begin{problem}
	Given an invertible isometric  representation $\pi:G\to \mathbb{I}^*(\mathcal{X})$ which  admits a functional-vector $(f,\tau)$, 	characterize $\mathscr{F}_G(\pi, \mathcal{X})$. In particular, classify Banach  spaces, groups and representations  such that 
	\begin{enumerate}[\upshape(i)]
		\item  	$f[\mathbb{I}^*(\pi(G)')^{-1},\mathbb{I}^*(\pi(G)')]\tau=  \mathscr{F}_G(\pi, \mathcal{X})$.
		\item $	f[\mathbb{I}^*(\pi(G)'')^{-1},\mathbb{I}^*(\pi(G)'')]\tau=  \mathscr{F}_G(\pi, \mathcal{X})$.
	\end{enumerate}
\end{problem}

\begin{problem}
	Is $\mathscr{F}_G(\pi, \mathcal{X}) \subseteq \mathcal{X}^*\times \mathcal{X}$ path connected in the  norm-topology?
\end{problem}

\section{Finite Gabor-Schauder frames}\label{SECTION3}
In this section we study Schauder frame generated by time-frequency shifts on finite abelian groups for finite dimensional Banach space $\mathbb{C}^{o(G)}$. Our main motivation comes from the Gabor analysis on finite abelian groups presented by Pfander in  \cite{PFANDER} and Feichtinger, Kozek and Luef in  \cite{FEICHTINGERKOZEKLUEF}. Let $G$ be a finite abelian group with identity $e$, $o(G)$ be the order of $G$ and $\widehat{G}$ be the set of all characters of $G$. We denote the circle group by $\mathbb{T}$. Then $\widehat{G}$ becomes a group with respect to pointwise multiplication of characters. The character which sends every element of $G$ to 1   is called as identity character and is denoted by $1_G$. Let $\{\delta_g\}_{g\in G}$ be the standard basis  for $\mathbb{C}^{o(G)}$ and $\{\zeta_g\}_{g\in G}$ be the coordinate functionals associated with $\{\delta_g\}_{g\in G}$. For $x=(x_g)_{g\in G}, y=(y_g)_{g\in G}\in  \mathbb{C}^{o(G)}$, we set 
\begin{align*}
	x \cdot y \coloneqq (x_gy_g)_{g\in G}
\end{align*}
and 
\begin{align*}
	x^*\coloneqq (\overline{x_g})_{g\in G}.
\end{align*}
From the classical Fourier analysis on finite abelian groups,   we note that we have  the following properties (see \cite{NATHANSON}). 
\begin{enumerate}
	\item $o(G)=o(\widehat{G})$.
	\item If $\xi, \chi \in \widehat{G}$, then $\langle \xi, \chi \rangle =\delta_{ \xi, \chi }$, where
	\begin{align}\label{CHARACTERSORTHOGONAL}
		\langle \xi, \chi \rangle \coloneqq \frac{1}{o(G)}\sum_{g\in G}\xi(g)\overline{\chi(g)}.
	\end{align}
	\item If $g,h \in G$, then $\langle g, h \rangle =\delta_{g,h}$, where 
	\begin{align}\label{VECTORSORTHOGONAL}
		\langle g, h \rangle \coloneqq \frac{1}{o(\widehat{G})}\sum_{\xi\in \widehat{G}}\xi(g)\overline{\xi(h)}.
	\end{align}
\end{enumerate}

Given $\xi \in \widehat{G}$,  the  \textbf{modulation operator} $M_\xi: \mathbb{C}^{o(G)}\to \mathbb{C}^{o(G)}$ is defined by 
\begin{align*}
	M_\xi(x_g)_{g\in G}\coloneqq (\xi(g)x_g)_{g\in G}, \quad \forall (x_g)_{g\in G}\in \mathbb{C}^{o(G)}.
\end{align*}
Given $k\in G$,  the  \textbf{translation operator} $T_k: \mathbb{C}^{o(G)}\to \mathbb{C}^{o(G)}$ by 
\begin{align*}
	T_k(x_g)_{g\in G}\coloneqq (x_{g-k})_{g\in G}, \quad \forall (x_g)_{g\in G}\in \mathbb{C}^{o(G)}.
\end{align*}
Let $\lambda\coloneqq (k, \xi) \in G\times \widehat{G}$.  By composing  modulation operator and  translation operator we get the  \textbf{time-frequency shift operator} $\pi(k, \xi) : \mathbb{C}^{o(G)}\to \mathbb{C}^{o(G)}$ defined as 

\begin{align*}
	\pi(\lambda)=	\pi(k, \xi)\coloneqq 	M_\xi T_k.
\end{align*}
Thus the action of $  \pi(\lambda)$ is given by 
\begin{align*}
	\pi(\lambda)(x_g)_{g\in G}=(\xi(g)x_{g-k})_{g\in G}, \quad \forall (x_g)_{g\in G}\in \mathbb{C}^{o(G)}.
\end{align*}
Following properties of time-frequency shift operators will be used and are well-known.
\begin{theorem}\cite{FEICHTINGERKOZEKLUEF}   \label{COMMTIMEFREQUENCY}
	Let 	$\lambda = (k, \xi), \mu=(l, \chi) \in G\times \widehat{G}$. Then 
	\begin{enumerate}[\upshape(i)]
		\item $\pi(\lambda+\mu)=\chi(k)\pi(\lambda)\pi(\mu)$.
		\item $\pi(\lambda)\pi(\mu)=\overline{\chi(k)}\xi(l)\pi(\mu)\pi(\lambda)$.
		\item $\pi(\lambda)^{-1}=\overline{\xi(k)}\pi(-\lambda)$.
	\end{enumerate}
\end{theorem}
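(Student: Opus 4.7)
The plan is to verify each identity by unwinding the definition $\pi(k,\xi) = M_\xi T_k$ and exploiting the fact that characters are multiplicative homomorphisms. The key algebraic input throughout is that for any character $\chi \in \widehat{G}$ and elements $g,k \in G$ we have $\chi(g-k) = \chi(g)\overline{\chi(k)}$, which encodes the interchange cost between translation and modulation.

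For part (i), I would compute both sides on a generic vector $x = (x_g)_{g\in G}$. The left-hand side unfolds as
\begin{align*}
\pi(\lambda+\mu)x = M_{\xi\chi}T_{k+l}x = (\xi(g)\chi(g)\, x_{g-k-l})_{g\in G}.
\end{align*}
For the right-hand side, applying $T_l$, $M_\chi$, $T_k$, $M_\xi$ in turn produces $(\xi(g)\chi(g-k)\,x_{g-k-l})_{g\in G}$, and inserting $\chi(g-k) = \chi(g)\overline{\chi(k)}$ gives a factor of $\overline{\chi(k)}$ relative to $\pi(\lambda+\mu)$. Multiplying by $\chi(k)$ matches both sides.

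For part (ii), rather than computing afresh, I would invoke (i) with $\lambda$ and $\mu$ swapped. Since $G\times\widehat{G}$ is abelian, $\pi(\lambda+\mu) = \pi(\mu+\lambda)$, so (i) yields
\begin{align*}
\chi(k)\,\pi(\lambda)\pi(\mu) = \pi(\lambda+\mu) = \pi(\mu+\lambda) = \xi(l)\,\pi(\mu)\pi(\lambda),
\end{align*}
and dividing by $\chi(k)$ (a unimodular complex number, so its inverse is $\overline{\chi(k)}$) gives the desired commutation relation. For part (iii), I would apply (i) to $\mu = -\lambda = (-k,\overline{\xi})$. The identity character $1_G$ and the identity element satisfy $\pi(0,1_G) = M_{1_G}T_0 = I_{\mathbb{C}^{o(G)}}$, so (i) reads $I = \overline{\xi}(k)\,\pi(\lambda)\pi(-\lambda) = \overline{\xi(k)}\,\pi(\lambda)\pi(-\lambda)$. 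Hence $\pi(-\lambda)$ multiplied by $\overline{\xi(k)}$ is a two-sided inverse of $\pi(\lambda)$, giving $\pi(\lambda)^{-1} = \overline{\xi(k)}\,\pi(-\lambda)$.

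There is no real obstacle here: all three claims reduce to the twist identity $\chi(g-k) = \chi(g)\overline{\chi(k)}$ together with abelianness of $G\times\widehat{G}$. The only place to be mildly careful is bookkeeping of the sign in $\chi(-k) = \overline{\chi(k)}$ in part (iii) and in matching conjugates to get the factor $\overline{\xi(k)}$ (not $\xi(k)$) on the correct side. Once part (i) is established by direct calculation, parts (ii) and (iii) are essentially formal consequences.
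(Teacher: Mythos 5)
Your proof is correct: the direct computation for (i) checks out (acting on $(x_g)_{g\in G}$, the right-hand side produces $\xi(g)\chi(g-k)x_{g-k-l}$ and the twist $\chi(g-k)=\chi(g)\overline{\chi(k)}$ accounts exactly for the factor $\chi(k)$), and deducing (ii) from commutativity of $G\times\widehat{G}$ and (iii) from $\mu=-\lambda$ with $\pi(0,1_G)=I$ is the standard route; the only point worth making explicit is that the right inverse in (iii) is two-sided, which follows either from finite-dimensionality or from applying (i) symmetrically. The paper itself offers no proof of this statement — it is quoted from the cited reference — so there is nothing to compare against; your argument fills that gap correctly.
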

Motivated from discrete Gabor analysis over finite abelian groups (see \cite{PFANDER, FEICHTINGERKOZEKLUEF}) we set the following notion. We emphasis here that, even though it is true that 	$\mathbb{C}^{o(G)}$ is a Hilbert space, given a Banach space structure on it, thinking of $\mathbb{C}^{o(G)}$ as a Hilbert space in frame theory will not work. A recent influential instance is in defining the notion of `\textbf{Frame Potential}' for Banach spaces where usual direct generalization of Hilbert space frame potential failed (see Proposition 2.5 in  \cite{CHAVEZDOMINGUEZFREEMANKORNELSON}). 
\begin{definition}
	Given a subgroup  $\Lambda \subseteq  G\times \widehat{G}$, a nonzero $f\in (\mathbb{C}^{o(G)})^*$ and a nonzero $\tau \in \mathbb{C}^{o(G)}$, the pair $(\{f(\pi(\lambda)^{-1})\}_{\lambda \in \Lambda}, \{\pi(\lambda)\tau\}_{\lambda \in \Lambda})$ is called as a \textbf{Gabor-Schauder system}. If the operator 
	\begin{align*}
S_{f,\tau, \Lambda}:	\mathbb{C}^{o(G)}\ni x \mapsto \sum_{\lambda\in \Lambda}f(\pi(\lambda)^{-1}x)\pi(\lambda)\tau \in 	\mathbb{C}^{o(G)}
	\end{align*}
 is invertible, then 	$(\{f(\pi(\lambda)^{-1})\}_{\lambda \in \Lambda}, \{\pi(\lambda)\tau\}_{\lambda \in \Lambda})$ is called as a \textbf{Gabor-Schauder frame}. 
\end{definition}
We are interested in subgroups $\Lambda$ of $G\times \widehat{G}$, which will give Gabor-Schauder frames. First we show that the full lattice $ G\times \widehat{G}$ will give a Gabor-Schauder frame.  Given a nonzero $f\in (\mathbb{C}^{o(G)})^*$, we define analysis operator 
\begin{align*}
	W_f:\mathbb{C}^{o(G)}\ni x \mapsto W_f x \coloneqq (f(\pi(\lambda)^{-1}x)_{\lambda \in G \times \widehat{G}}\in \mathbb{C}^{o(G\times \widehat{G})}
\end{align*}
and given a nonzero $\tau \in \mathbb{C}^{o(G)}$, we define synthesis operator 
\begin{align*}
	V_\tau: \mathbb{C}^{o(G\times \widehat{G})} \ni (a_\lambda)_{\lambda \in G \times \widehat{G}}\mapsto \sum_{\lambda\in G \times \widehat{G}} a_\lambda \pi(\lambda)\tau \in \mathbb{C}^{o(G)}.
\end{align*}
Our first result is that composition of previous two operators give scalar times identity.
 \begin{theorem}\label{MT}
 For  $f\in (\mathbb{C}^{o(G)})^*$ and 	$\tau \in \mathbb{C}^{o(G)}$, 
 \begin{align}\label{MOYAL}
 	V_\tau W_f=o(G)f(\tau)I_{\mathbb{C}^{o(G)}}.
 \end{align}
 \end{theorem}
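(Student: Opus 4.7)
The plan is to verify the identity by evaluating both sides on the standard basis $\{\delta_h\}_{h\in G}$ for $\mathbb{C}^{o(G)}$. Fix $h\in G$. By definition,
\begin{align*}
V_\tau W_f \delta_h=\sum_{\lambda=(k,\xi)\in G\times \widehat{G}}f(\pi(\lambda)^{-1}\delta_h)\,\pi(\lambda)\tau.
\end{align*}
The first step is to compute $\pi(k,\xi)^{-1}\delta_h$ explicitly. Using Theorem \ref{COMMTIMEFREQUENCY}(iii), $\pi(k,\xi)^{-1}=\overline{\xi(k)}\,\pi(-k,\overline{\xi})$, and a direct unwinding of $M_{\overline{\xi}}T_{-k}\delta_h$ gives $\overline{\xi(h)}\,\delta_{h-k}$, so $f(\pi(k,\xi)^{-1}\delta_h)=\overline{\xi(h)}\,f(\delta_{h-k})$. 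Similarly, the $g$-th coordinate of $\pi(k,\xi)\tau$ is $\xi(g)\tau_{g-k}$.

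Substituting these into the $g$-th coordinate of $V_\tau W_f \delta_h$ and separating the sums yields
\begin{align*}
(V_\tau W_f \delta_h)_g=\sum_{k\in G}f(\delta_{h-k})\,\tau_{g-k}\sum_{\xi\in \widehat{G}}\xi(g)\overline{\xi(h)}.
\end{align*}
The next step is to apply the orthogonality relation (\ref{VECTORSORTHOGONAL}), which rewrites as $\sum_{\xi\in \widehat{G}}\xi(g)\overline{\xi(h)}=o(G)\,\delta_{g,h}$. Thus all coordinates with $g\neq h$ vanish, while for $g=h$ the remaining sum is
\begin{align*}
o(G)\sum_{k\in G}f(\delta_{h-k})\,\tau_{h-k}=o(G)\sum_{j\in G}\tau_j\,f(\delta_j)=o(G)\,f(\tau),
\end{align*}
after the change of variable $j=h-k$ and the identity $\tau=\sum_j \tau_j\delta_j$. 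Hence $V_\tau W_f \delta_h=o(G)f(\tau)\delta_h$ for every $h\in G$, which proves (\ref{MOYAL}) by linearity.

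No step presents a serious obstacle; the only delicate point is bookkeeping the characters correctly in the formula for $\pi(k,\xi)^{-1}\delta_h$, since the factor $\overline{\xi(k)}\,\overline{\xi(h-k)}=\overline{\xi(h)}$ is what allows the sum over $\widehat{G}$ to be collapsed by Pontryagin duality. Once this simplification is in place, everything reduces to the orthogonality of characters, which is the essence of the Moyal-type identity.
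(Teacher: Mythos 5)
Your proof is correct and follows essentially the same route as the paper's: expand the double sum over $G\times\widehat{G}$, isolate the sum over characters, and collapse it with the orthogonality relation (\ref{VECTORSORTHOGONAL}). The only difference is that you test the identity on the basis vectors $\delta_h$ and conclude by linearity, whereas the paper carries a general $x=(x_g)_{g\in G}$ through the whole computation; this is a presentational simplification, not a different argument.
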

\begin{proof}
Let $x=(x_g)_{g\in G} \in \mathbb{C}^{o(G)}$ and $\tau=(\tau_g)_{g\in G}$. Then 

\begin{align*}
	&V_\tau W_fx=V_\tau (f[\pi(\lambda)^{-1}x])_{\lambda \in G \times \widehat{G}}=\sum_{\lambda\in G \times \widehat{G}}f[\pi(\lambda)^{-1}x]\pi(\lambda)\tau\\
	&=	\sum_{(k, \xi)\in G \times \widehat{G}}f[\overline{\xi(k)}\pi(-k, -\xi)(x_g)_{g\in G}]\pi(k, \xi)(\tau_g)_{g\in G}\\
	&=\sum_{(k, \xi)\in G \times \widehat{G}}f[\overline{\xi(k)}((-\xi)(h)x_{h+k})_{h\in G})] (\xi(g)\tau_{g-k})_{g\in G}=\sum_{(k, \xi)\in G \times \widehat{G}}f[(\overline{\xi(k+h)}x_{h+k})_{h\in G}](\xi(g)\tau_{g-k})_{g\in G}\\
	&=\left(\sum_{(k, \xi)\in G \times \widehat{G}}f[(\overline{\xi(k+h)}x_{h+k})_{h\in G}]\xi(g)\tau_{g-k}\right)_{g\in G}=\left(\sum_{(k, \xi)\in G \times \widehat{G}}f[(\overline{\xi(k+h)}\xi(g)x_{h+k})_{h\in G}]\tau_{g-k}\right)_{g\in G}\\
		&=\left(\sum_{k\in G}\sum_{\xi\in\widehat{G}}f[(\overline{\xi(k+h)}\xi(g)x_{h+k})_{h\in G}]\tau_{g-k}\right)_{g\in G}
		=\left(\sum_{k\in G}f\left[\sum_{\xi\in\widehat{G}}(\overline{\xi(k+h)}\xi(g)x_{h+k})_{h\in G}\right]\tau_{g-k}\right)_{g\in G}\\
		&=\left(\sum_{k\in G}f\left[\sum_{\xi\in\widehat{G}}\overline{\xi(k+h)}\xi(g)x_{h+k}\right]_{h\in G}\tau_{g-k}\right)_{g\in G}
		=o(G)\left(\sum_{k\in G}f\left[\delta_{k+h,g}x_{h+k}\right]_{h\in G}\tau_{g-k}\right)_{g\in G}\\
		&=o(G)\sum_{g\in G}\sum_{k\in G}f\left[\delta_{k+h,g}x_{h+k}\right]_{h\in G}\tau_{g-k}\delta_g
		=o(G)\sum_{g\in G}\sum_{k\in G}f\left[\sum_{h\in G}\delta_{k+h,g}x_{h+k}\delta_h\right]\tau_{g-k}\delta_g\\
		&=o(G)\sum_{g\in G}\sum_{k\in G}\sum_{h\in G}\delta_{k+h,g}x_{h+k}f[\delta_h]\tau_{g-k}\delta_g=o(G)\sum_{g\in G}\sum_{k\in G}x_gf[\delta_{g-k}]\tau_{g-k} \delta_{g}\\
		&=o(G)\sum_{g\in G}x_gf\left[\sum_{k\in G}\tau_{g-k}\delta_{g-k}\right]\delta_g=o(G)\sum_{g\in G}x_gf[(\tau_k)_{k\in G}]\delta_g=o(G)f(\tau)x.
\end{align*}	
\end{proof}
 We call Equation (\ref{MOYAL}) as \textbf{Schauder-Moyal formula} for Banach space. It is easy to see that for Hilbert spaces, whenever $f$ is determined by $\tau$, Schauder-Moyal formula reduces to familiar Moyal formula.  Schauder-Moyal formula immediately gives the following corollary.
\begin{corollary}\label{GSCOR}
 If $f\in (\mathbb{C}^{o(G)})^*$ and 	$\tau \in \mathbb{C}^{o(G)}$ are such that $f(\tau)\neq 0$, then 	
\begin{align}\label{INVERSIONFORMULA}
	x=\frac{1}{o(G)f(\tau)}\sum_{\lambda\in G \times \widehat{G}}f(\pi(\lambda)^{-1}x)\pi(\lambda)\tau, \quad \forall x \in \mathbb{C}^{o(G)}.
\end{align}	
In other words,  
\begin{align*}
	(\{f(\pi(\lambda)^{-1})\}_{\lambda \in G\times \widehat{G}}, \{\pi(\lambda)\tau\}_{\lambda \in G\times \widehat{G}})
\end{align*}
is a Gabor-Schauder frame for 	$\mathbb{C}^{o(G)}$.
\end{corollary}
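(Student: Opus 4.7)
The plan is to derive this as an immediate consequence of the Schauder-Moyal formula proved in Theorem \ref{MT}. Observe that by the very definitions of the analysis operator $W_f$ and synthesis operator $V_\tau$ given just before that theorem, the composition acts on any $x\in\mathbb{C}^{o(G)}$ as
\[
V_\tau W_f x = \sum_{\lambda\in G\times\widehat{G}} f(\pi(\lambda)^{-1}x)\,\pi(\lambda)\tau,
\]
which is exactly the operator $S_{f,\tau,G\times\widehat{G}}$ appearing in the definition of a Gabor-Schauder system.

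First I would invoke Theorem \ref{MT} to rewrite the right-hand side as $o(G)f(\tau)\,x$. Under the hypothesis $f(\tau)\neq 0$ the scalar $o(G)f(\tau)$ is nonzero, so dividing through yields Equation (\ref{INVERSIONFORMULA}) for every $x\in\mathbb{C}^{o(G)}$, which is the desired reconstruction identity. Simultaneously, this computation shows that $S_{f,\tau,G\times\widehat{G}}=o(G)f(\tau)I_{\mathbb{C}^{o(G)}}$ is a nonzero scalar multiple of the identity, hence invertible; by the definition stated just before Theorem \ref{MT}, this is precisely the condition for $(\{f(\pi(\lambda)^{-1})\}_{\lambda\in G\times\widehat{G}},\{\pi(\lambda)\tau\}_{\lambda\in G\times\widehat{G}})$ to be a Gabor-Schauder frame.

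There is essentially no obstacle here: the content of the corollary is entirely packaged into the Schauder-Moyal formula, and the corollary is merely the observation that when $f(\tau)\neq 0$ the identity of Theorem \ref{MT} can be normalized into a reconstruction formula, which in turn certifies the frame property. The only thing one should be careful about is to check that $f\neq 0$ and $\tau\neq 0$ (required by the definition of Gabor-Schauder system), but both follow automatically from $f(\tau)\neq 0$.
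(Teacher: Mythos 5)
Your proof is correct and follows exactly the paper's route: the paper states that the corollary is an immediate consequence of the Schauder-Moyal formula of Theorem \ref{MT}, obtained by identifying $V_\tau W_f$ with $S_{f,\tau,G\times\widehat{G}}$ and dividing by the nonzero scalar $o(G)f(\tau)$. Your additional remark that $f\neq 0$ and $\tau\neq 0$ follow from $f(\tau)\neq 0$ is a sensible check but does not change the argument.
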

We call Equation (\ref{INVERSIONFORMULA}) as the  \textbf{inversion formula for Banach space}. It reduces to inversion formula for the short-time Fourier transform for Hilbert spaces, whenever $f$ is determined by $\tau$.\\
Recall  that given linear operators $T, S:\mathbb{C}^{o(G)} \to \mathbb{C}^{o(G)}$, if we define 
\begin{align*}
	\langle T, S \rangle_{\text{HS}}\coloneqq \sum_{g \in G} \langle T\delta_g, S\delta_g \rangle, 
\end{align*}
then  the space $\mathcal{L}(\mathbb{C}^{o(G)})$  of all linear operators from $\mathbb{C}^{o(G)}$ to itself    is a Hilbert space w.r.t. inner product $	\langle T, S \rangle_{\text{HS}}$. We denote this Hilbert space  by $\text{HS}(\mathbb{C}^{o(G)})$. We need the  following result in continuation. 
\begin{theorem}\cite{FEICHTINGERKOZEKLUEF}\label{ONBHS}
	The family 
	\begin{align}\label{HSBASIS}
		\left\{\frac{1}{\sqrt{o(G)}}\pi(\lambda) \right\} _{\lambda\in G\times \widehat{G}}	
	\end{align}
	is an  orthonormal basis  for $\text{HS}(\mathbb{C}^{o(G)})$.
\end{theorem}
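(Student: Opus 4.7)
The plan is to establish orthonormality directly by computing the Hilbert--Schmidt inner product of two time-frequency shifts on the standard basis, and then deduce the basis property by a dimension count.

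First I would record the explicit action of $\pi(k,\xi)$ on basis vectors. A short computation shows $T_k\delta_g = \delta_{g+k}$, and therefore
\begin{align*}
\pi(k,\xi)\delta_g = M_\xi T_k \delta_g = \xi(g+k)\,\delta_{g+k}.
\end{align*}
This is the key identity needed to decouple the translation and modulation contributions when forming inner products.

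Next I would compute $\langle \pi(\lambda), \pi(\mu)\rangle_{\mathrm{HS}}$ for $\lambda=(k,\xi)$ and $\mu=(l,\chi)$. Using the formula above and the orthonormality of $\{\delta_g\}_{g\in G}$ in $\mathbb{C}^{o(G)}$,
\begin{align*}
\langle \pi(k,\xi)\delta_g, \pi(l,\chi)\delta_g\rangle = \xi(g+k)\overline{\chi(g+l)}\,\langle \delta_{g+k}, \delta_{g+l}\rangle = \xi(g+k)\overline{\chi(g+l)}\,\delta_{k,l}.
\end{align*}
Summing over $g\in G$ kills everything unless $k=l$, in which case we get $\sum_{h\in G}\xi(h)\overline{\chi(h)}$, which by the character orthogonality relation (\ref{CHARACTERSORTHOGONAL}) equals $o(G)\,\delta_{\xi,\chi}$. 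Combining,
\begin{align*}
\langle \pi(\lambda),\pi(\mu)\rangle_{\mathrm{HS}} = o(G)\,\delta_{\lambda,\mu},
\end{align*}
so rescaling by $1/\sqrt{o(G)}$ yields an orthonormal system.

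Finally, I would close by a cardinality/dimension argument. Since $\dim \mathrm{HS}(\mathbb{C}^{o(G)})=o(G)^2$ and $|G\times\widehat{G}|=o(G)\cdot o(\widehat{G})=o(G)^2$, the orthonormal family (\ref{HSBASIS}) has the correct size, hence is an orthonormal basis. The only subtle step is keeping track of indices in the shifted modulation $\xi(g+k)$ when summing over $g$; once that is written out carefully, character orthogonality delivers the result with no further effort.
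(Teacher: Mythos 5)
Your proof is correct. The paper states this result as a citation to \cite{FEICHTINGERKOZEKLUEF} and gives no proof of its own, so there is nothing internal to compare against; your argument --- computing $\pi(k,\xi)\delta_g=\xi(g+k)\delta_{g+k}$, reducing $\langle\pi(\lambda),\pi(\mu)\rangle_{\mathrm{HS}}$ to the character orthogonality relation (\ref{CHARACTERSORTHOGONAL}) to get $o(G)\delta_{\lambda,\mu}$, and closing with the dimension count $o(G)\cdot o(\widehat{G})=o(G)^2=\dim\mathrm{HS}(\mathbb{C}^{o(G)})$ --- is the standard and complete way to establish it.
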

Recall that given a subgroup (also known as lattice) $\Lambda \subseteq  G\times \widehat{G}$, we define the  adjoint subgroup  of $\Lambda $, denoted by $\Lambda ^0$ as 
\begin{align*}
	\Lambda ^0	\coloneqq \{\mu \in G\times \widehat{G}: \pi(\lambda)\pi(\mu)= \pi(\mu)\pi(\lambda), \forall \lambda \in \Lambda\}.
\end{align*}
Theorem  \ref{COMMTIMEFREQUENCY}  says that  $\Lambda ^0$ is a subgroup of $G\times \widehat{G}$.  Now given a nonzero vector $\tau \in \mathbb{C}^{o(G)}$ and a nonzero functional $f\in  (\mathbb{C}^{o(G)})^*$, we define the  \textbf{Gabor-Schauder  frame operator} $S_{f, \tau, \Lambda }:\mathbb{C}^{o(G)} \to \mathbb{C}^{o(G)}$ as 
\begin{align*}
	S_{f,\tau, \Lambda}x\coloneqq \sum_{\lambda\in \Lambda}f(\pi(\lambda)^{-1}x)\pi(\lambda)\tau, \quad \forall x \in \mathbb{C}^{o(G)}.
\end{align*}
Following key result will be used repeatedly. It mainly uses group properties of $\Lambda$.
\begin{theorem}\label{GABORCOMMUTES}
	Let  	$\Lambda $ be a subgroup of $G\times \widehat{G}$,   $\tau    \in \mathbb{C}^{o(G)}$ and $f\in  (\mathbb{C}^{o(G)})^*$. Then for each $\mu \in  \Lambda$, the  time-frequency shifts $\pi(\mu)$ commute with the Gabor-Schauder frame operator   $S_{f, \tau, \Lambda}$. 
\end{theorem}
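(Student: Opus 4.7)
The plan is to show $\pi(\mu)S_{f,\tau,\Lambda} = S_{f,\tau,\Lambda}\pi(\mu)$ by expanding both operators from the definition of $S_{f,\tau,\Lambda}$, using the projective commutation relations recorded in Theorem \ref{COMMTIMEFREQUENCY}, and then reindexing via the additive group structure of $\Lambda$.

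First, pulling $\pi(\mu)$ inside the sum gives
$\pi(\mu)S_{f,\tau,\Lambda}x=\sum_{\lambda\in\Lambda}f(\pi(\lambda)^{-1}x)\,\pi(\mu)\pi(\lambda)\tau.$
Writing $\lambda=(k,\xi)$ and $\mu=(l,\chi)$, applying Theorem \ref{COMMTIMEFREQUENCY}(i) with the roles of $\lambda$ and $\mu$ swapped produces $\pi(\mu)\pi(\lambda)=\overline{\xi(l)}\,\pi(\lambda+\mu)$, so this side becomes
$\pi(\mu)S_{f,\tau,\Lambda}x=\sum_{\lambda\in\Lambda}\overline{\xi(l)}\,f(\pi(\lambda)^{-1}x)\,\pi(\lambda+\mu)\tau.$

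For the opposite composition,
$S_{f,\tau,\Lambda}\pi(\mu)x=\sum_{\lambda\in\Lambda}f(\pi(\lambda)^{-1}\pi(\mu)x)\,\pi(\lambda)\tau.$
I would substitute $\lambda=\nu+\mu$, which is a bijection of $\Lambda$ onto itself precisely because $\Lambda$ is a subgroup and $\mu\in\Lambda$. Combining parts (i) and (ii) of Theorem \ref{COMMTIMEFREQUENCY}, I expect to reduce $\pi(\nu+\mu)^{-1}\pi(\mu)$ to $\overline{\xi'(l)}\,\pi(\nu)^{-1}$, where $\nu=(k',\xi')$ (the phase $\overline{\chi(k')}$ coming from inverting $\pi(\nu+\mu)=\chi(k')\pi(\nu)\pi(\mu)$ is exactly cancelled by the phase $\chi(k')\overline{\xi'(l)}$ that arises when pushing $\pi(\mu)$ past $\pi(\nu)^{-1}$ using (ii)). After this simplification the sum rewrites as $\sum_{\nu\in\Lambda}\overline{\xi'(l)}\,f(\pi(\nu)^{-1}x)\,\pi(\nu+\mu)\tau$, which matches the expression obtained for $\pi(\mu)S_{f,\tau,\Lambda}x$ term by term.

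The main obstacle is just careful bookkeeping of the projective cocycle $(\lambda,\mu)\mapsto\chi(k)$ appearing in Theorem \ref{COMMTIMEFREQUENCY}: one has to be careful that the various phase factors produced by inverting a product of time-frequency shifts and by swapping two such shifts combine to leave only the factor $\overline{\xi'(l)}$ needed to match the other side. The essential structural input, however, is genuinely group-theoretic: the closure of $\Lambda$ under addition makes $\lambda\mapsto\nu+\mu$ a bijection of $\Lambda$, and without this the reindexing step would fail. This is exactly why $\mu$ is required to lie in $\Lambda$ and not merely in $G\times\widehat{G}$.
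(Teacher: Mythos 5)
Your proposal is correct and rests on the same two ingredients as the paper's proof: the projective commutation relations of Theorem \ref{COMMTIMEFREQUENCYREF} (your phase bookkeeping checks out, and the reindexing $\lambda=\nu+\mu$ is valid exactly because $\Lambda$ is a subgroup containing $\mu$). The only organizational difference is that the paper instead verifies $\pi(\mu)^{-1}S_{f,\tau,\Lambda}\pi(\mu)=S_{f,\tau,\Lambda}$, which lets the phases cancel automatically since each summand has the form $f(A^{-1}x)\,A\tau$ with $A=\pi(\mu)^{-1}\pi(\lambda)$ and is therefore invariant under rescaling $A$; your direct comparison of $\pi(\mu)S_{f,\tau,\Lambda}$ with $S_{f,\tau,\Lambda}\pi(\mu)$ is equally valid but pays for it with the explicit cocycle computation.

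\let\COMMTIMEFREQUENCYREF\relax
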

\begin{proof}
	Since $\pi(\mu)$ is invertible, to show $\pi(\mu)S_{f, \tau,  \Lambda}=S_{f, \tau,  \Lambda}\pi(\mu)$ it suffices to show that $\pi(\mu)^{-1} S_{f,\tau, \Lambda}\pi(\mu)=S_{f, \tau, \Lambda}$. Let $x \in \mathbb{C}^{o(G)}$ and $\mu=(l,\chi) $. Now using  the fact that $G$ is a group and using Theorem  \ref{COMMTIMEFREQUENCY},
	\begin{align*}
	\pi(l,\chi) ^{-1} S_{f, \tau,  \Lambda}\pi(l,\chi) x&=\pi(l,\chi) ^{-1} \left(\sum_{(k, \xi)\in \Lambda }f[\pi(k, \xi)^{-1}\pi(l,\chi) x]   \pi (k,\xi)\tau\right)\\
	&=\sum_{(k, \xi)\in \Lambda }f[\pi(k, \xi)^{-1}\pi(l,\chi) x]   \pi(l,\chi) ^{-1}\pi (k,\xi)\tau\\
	&=\sum_{(k, \xi)\in \Lambda }f[(\pi(l,\chi)^{-1}\pi(k, \xi))^{-1} x]   \pi(l,\chi) ^{-1}\pi (k,\xi)\tau\\
	&=\sum_{(k, \xi)\in \Lambda }f[(\overline{\chi(l)}\pi(-l,-\chi)\pi(k, \xi))^{-1} x]   \overline{\chi(l)}\pi(-l,-\chi) \pi (k,\xi)\tau\\
	&=\sum_{(k, \xi)\in \Lambda }f[(\pi(-l,-\chi)\pi(k, \xi))^{-1} x]   \pi(-l,-\chi) \pi (k,\xi)\tau\\
	&=\sum_{(k, \xi)\in \Lambda }f[(\overline{\xi(-l)}\pi(-l+k,-\chi+\xi))^{-1} x]   \overline{\xi(-l)}\pi(-l+k,-\chi+\xi)\tau\\
	&=\sum_{(k, \xi)\in \Lambda }f[\pi(-l+k,-\chi+\xi)^{-1} x]   \pi(-l+k,-\chi+\xi)\tau=S_{f, \tau,  \Lambda}x.
	\end{align*}
\end{proof}
Recall that (see\cite{FREEMANODELLSCHLUMPRECHT}) a pair  $(\{f_j\}_{j=1}^n, \{\tau_j\}_{j=1}^n)$ is said to be an approximate Schauder frame (written as ASF) for $\mathbb{C}^{o(G)}$   if the operator 
\begin{align*}
	S_{f, \tau}: \mathbb{C}^{o(G)} \ni x \mapsto S_{f, \tau}x\coloneqq \sum_{j=1}^{n}f_j(x)\tau_j \in  \mathbb{C}^{o(G)}
\end{align*}
is invertible. Also we recall that an ASF $(\{g_j\}_{j=1}^n, \{\omega_j\}_{j=1}^n)$  for $\mathbb{C}^{o(G)}$ is said to be a dual for $(\{f_j\}_{j=1}^n, \{\tau_j\}_{j=1}^n)$  if 

\begin{align*}
	x=\sum_{j=1}^{n}f_j(x)\omega_j=\sum_{j=1}^{n}g_j(x)\tau_j, \quad \forall x \in \mathbb{C}^{o(G)}.
\end{align*}
It is an easy observation that the ASF $(\{f_jS_{f, \tau}^{-1}\}_{j=1}^n, \{S_{f, \tau}^{-1}\tau_j\}_{j=1}^n)$ is always a dual for $(\{f_j\}_{j=1}^n, \{\tau_j\}_{j=1}^n)$. This is called as canonical dual of $(\{f_j\}_{j=1}^n, \{\tau_j\}_{j=1}^n)$. Following corollary says that canonical dual of Gabor-Schauder frame is again a Gabor-Schauder frame.
\begin{corollary}\label{CANONICALDUALGABOR}
	Let  	$\Lambda $ be a subgroup of $G\times \widehat{G}$,   $\tau    \in \mathbb{C}^{o(G)}$ and $f\in  (\mathbb{C}^{o(G)})^*$. If $(\{f(\pi(\lambda)^{-1})\}_{\lambda \in \Lambda}, \{\pi(\lambda)\tau\}_{\lambda \in \Lambda})$ is a   Gabor-Schauder   frame for $ \mathbb{C}^{o(G)}$, then its canonical dual can be written as $(\{\phi(\pi(\lambda)^{-1})\}_{\lambda \in \Lambda}, \{\pi(\lambda)\omega\}_{\lambda \in \Lambda})$ for some $\omega  \in \mathbb{C}^{o(G)}$ and $\phi\in  (\mathbb{C}^{o(G)})^*$. In other words,     there exist  $\omega  \in \mathbb{C}^{o(G)}$ and $\phi \in  (\mathbb{C}^{o(G)})^*$ such that $(\{\phi(\pi(\lambda)^{-1})\}_{\lambda \in \Lambda}, \{\pi(\lambda)\omega\}_{\lambda \in \Lambda})$ is a  Gabor-Schauder   frame for $ \mathbb{C}^{o(G)}$ and 
	\begin{align*}
		x=	\sum_{\lambda\in \Lambda}\phi(\pi(\lambda)^{-1}x) \pi(\lambda)\tau=	\sum_{\lambda\in \Lambda}f(\pi(\lambda)^{-1}x)  \pi(\lambda)\omega, \quad \forall x \in \mathbb{C}^{o(G)}.
	\end{align*}
\end{corollary}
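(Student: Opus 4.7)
The plan is to reduce the statement to a direct consequence of Theorem \ref{GABORCOMMUTES}. By definition, the canonical dual of the Gabor-Schauder frame $(\{f(\pi(\lambda)^{-1})\}_{\lambda \in \Lambda}, \{\pi(\lambda)\tau\}_{\lambda \in \Lambda})$ is obtained by composing analysis functionals with $S_{f,\tau,\Lambda}^{-1}$ on the right and synthesis vectors with $S_{f,\tau,\Lambda}^{-1}$ on the left. So I would first write the canonical dual explicitly as
\begin{align*}
\bigl(\{f(\pi(\lambda)^{-1}\,S_{f,\tau,\Lambda}^{-1}\,\cdot\,)\}_{\lambda \in \Lambda},\; \{S_{f,\tau,\Lambda}^{-1}\pi(\lambda)\tau\}_{\lambda \in \Lambda}\bigr),
\end{align*}
and then use commutation to massage each side into the Gabor form.

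For the synthesis side, Theorem \ref{GABORCOMMUTES} gives $\pi(\lambda) S_{f,\tau,\Lambda} = S_{f,\tau,\Lambda}\pi(\lambda)$ for every $\lambda\in\Lambda$, and since $S_{f,\tau,\Lambda}$ is invertible this immediately yields $\pi(\lambda)S_{f,\tau,\Lambda}^{-1} = S_{f,\tau,\Lambda}^{-1}\pi(\lambda)$. Hence $S_{f,\tau,\Lambda}^{-1}\pi(\lambda)\tau = \pi(\lambda)(S_{f,\tau,\Lambda}^{-1}\tau)$, and I would set $\omega \coloneqq S_{f,\tau,\Lambda}^{-1}\tau$. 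For the analysis side, the same commutation gives $\pi(\lambda)^{-1}S_{f,\tau,\Lambda}^{-1} = S_{f,\tau,\Lambda}^{-1}\pi(\lambda)^{-1}$, so the $\lambda$-th analysis functional sends $x$ to $f(S_{f,\tau,\Lambda}^{-1}\pi(\lambda)^{-1}x) = (fS_{f,\tau,\Lambda}^{-1})(\pi(\lambda)^{-1}x)$. I would therefore set $\phi \coloneqq fS_{f,\tau,\Lambda}^{-1}\in(\mathbb{C}^{o(G)})^*$.

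With these choices, the canonical dual equals $(\{\phi(\pi(\lambda)^{-1})\}_{\lambda\in\Lambda}, \{\pi(\lambda)\omega\}_{\lambda\in\Lambda})$, exhibiting it as a Gabor-Schauder system with window $\omega$ and functional $\phi$. That it is again a Gabor-Schauder \emph{frame} follows from the fact that canonical duals of ASFs are themselves ASFs (their frame operator is the inverse of the original one, hence invertible). The two reconstruction identities in the statement are then the standard dual reconstruction formulas for ASFs applied to this pair.

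There is essentially no obstacle beyond identifying $\omega$ and $\phi$; the only point requiring care is the direction in which $S_{f,\tau,\Lambda}^{-1}$ commutes with $\pi(\lambda)$ and with $\pi(\lambda)^{-1}$, both of which are purely algebraic consequences of Theorem \ref{GABORCOMMUTES}. Everything else is notation-chasing.
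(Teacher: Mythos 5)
Your proposal is correct and follows essentially the same route as the paper: both write the canonical dual as $(\{f(\pi(\lambda)^{-1}S_{f,\tau,\Lambda}^{-1})\}_{\lambda\in\Lambda},\{S_{f,\tau,\Lambda}^{-1}\pi(\lambda)\tau\}_{\lambda\in\Lambda})$, set $\omega\coloneqq S_{f,\tau,\Lambda}^{-1}\tau$ and $\phi\coloneqq fS_{f,\tau,\Lambda}^{-1}$, and invoke Theorem \ref{GABORCOMMUTES} to commute $S_{f,\tau,\Lambda}^{-1}$ past $\pi(\lambda)$ and $\pi(\lambda)^{-1}$. Your added remarks on why the inverse also commutes and why the dual is again a frame are correct and only make explicit what the paper leaves implicit.
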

\begin{proof}
	Canonical dual for $(\{f(\pi(\lambda)^{-1})\}_{\lambda \in \Lambda}, \{\pi(\lambda)\tau\}_{\lambda \in \Lambda})$ is given by $(\{f(\pi(\lambda)^{-1}S_{f,\tau, \Lambda}^{-1})\}_{\lambda \in \Lambda}, \{S_{f,\tau, \Lambda}^{-1}\pi(\lambda)\tau\}_{\lambda \in \Lambda})$.  Define $\omega\coloneqq S_{f, \tau, \Lambda}^{-1}\tau$ and $\phi\coloneqq fS_{f, \tau, \Lambda}^{-1}$. Then using Theorem \ref{GABORCOMMUTES}, 
	
	\begin{align*}
		&\{ S_{f, \tau, \Lambda}^{-1}\pi (\lambda)\tau\}_{\lambda \in \Lambda}=\{ \pi(\lambda)S_{f, \tau, \Lambda}^{-1}\tau\}_{\lambda \in \Lambda}=\{ \pi (\lambda)\omega\}_{\lambda \in \Lambda}, \\
		&\{f(\pi(\lambda)^{-1}S_{f, \tau, \Lambda}^{-1})\}_{\lambda \in \Lambda}=\{f(S_{f, \tau, \Lambda}^{-1}\pi(\lambda)^{-1})\}_{\lambda \in \Lambda}=\{\phi(\pi(\lambda)^{-1})\}_{\lambda \in \Lambda}.
	\end{align*}
\end{proof}
Next result is the key result of this section.
\begin{theorem}\label{JANSSENTHEOREM}
	Let  	$\Lambda $ be  a subgroup of $G\times \widehat{G}$ and  $\tau  \in \mathbb{C}^{o(G)}$, $f\in  (\mathbb{C}^{o(G)})^*$. Then 
	\begin{align}\label{BEFOREJANSSEN}
		S_{f, \tau,  \Lambda}x=	\sum_{\lambda\in \Lambda}f(\pi(\lambda)^{-1}x) \pi(\lambda)\tau    =\frac{o(\Lambda)}{o(G)}\sum_{\mu\in \Lambda^0}f(\pi(\mu)^{-1}\tau) \pi(\mu)x=\frac{o(\Lambda)}{o(G)}S_{f,  x, \Lambda^0}\tau, \quad \forall x \in \mathbb{C}^{o(G)}.
	\end{align}
\end{theorem}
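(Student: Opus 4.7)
The plan is to realize $S_{f,\tau,\Lambda}$ as an element of the span of $\{\pi(\mu):\mu\in\Lambda^0\}$ via a Hilbert--Schmidt expansion, and then pin down the coefficients by a direct calculation.

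First, Theorem \ref{GABORCOMMUTES} gives $\pi(\mu)\,S_{f,\tau,\Lambda} = S_{f,\tau,\Lambda}\,\pi(\mu)$ for every $\mu\in\Lambda$. Using the orthonormal basis of $\text{HS}(\mathbb{C}^{o(G)})$ furnished by Theorem \ref{ONBHS}, I would write the unique expansion
$$S_{f,\tau,\Lambda} \;=\; \sum_{\nu\in G\times\widehat{G}} c_\nu\,\pi(\nu).$$
Theorem \ref{COMMTIMEFREQUENCY}(ii) shows that conjugation $\pi(\mu)\pi(\nu)\pi(\mu)^{-1}$ scales $\pi(\nu)$ by a unimodular number that equals $1$ precisely when $\pi(\mu)$ and $\pi(\nu)$ commute. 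Imposing the commutation identity for every $\mu\in\Lambda$ and invoking linear independence of the $\pi(\nu)$'s forces $c_\nu=0$ whenever $\nu\notin\Lambda^0$, so the sum collapses onto $\Lambda^0$.

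The substantive step is computing $c_\nu$ for $\nu\in\Lambda^0$. Orthonormality gives
$$c_\nu \;=\; \frac{1}{o(G)}\,\langle S_{f,\tau,\Lambda},\pi(\nu)\rangle_{\text{HS}} \;=\; \frac{1}{o(G)}\sum_{\lambda\in\Lambda}\sum_{g\in G} f\bigl(\pi(\lambda)^{-1}\delta_g\bigr)\,\langle\pi(\lambda)\tau,\pi(\nu)\delta_g\rangle.$$
Since each $\pi(\lambda)$ is unitary on $\mathbb{C}^{o(G)}$, $\langle\pi(\lambda)\tau,\pi(\nu)\delta_g\rangle = \langle\tau,\pi(\lambda)^{-1}\pi(\nu)\delta_g\rangle$, and the commutation of $\pi(\nu)$ with $\pi(\lambda)$ (from $\nu\in\Lambda^0$) turns this into $\langle\pi(\nu)^{-1}\tau,\pi(\lambda)^{-1}\delta_g\rangle$. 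Writing $u_g:=\pi(\lambda)^{-1}\delta_g$, the family $\{u_g\}_{g\in G}$ is an orthonormal basis of $\mathbb{C}^{o(G)}$; expanding $\pi(\nu)^{-1}\tau = \sum_g \langle\pi(\nu)^{-1}\tau,u_g\rangle u_g$ and then applying the linear functional $f$ collapses the inner sum to $f(\pi(\nu)^{-1}\tau)$, a value independent of $\lambda$. The outer sum over $\lambda\in\Lambda$ then contributes a factor of $o(\Lambda)$, yielding $c_\nu = \tfrac{o(\Lambda)}{o(G)}\,f(\pi(\nu)^{-1}\tau)$.

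Substituting back and applying to $x$ produces the middle equality of (\ref{BEFOREJANSSEN}), and the final equality $\tfrac{o(\Lambda)}{o(G)}\,S_{f,x,\Lambda^0}\tau$ is merely the definition of $S_{f,x,\Lambda^0}$ evaluated at $\tau$. The chief technical obstacle is the coefficient computation: one must carefully track when the commutativity afforded by $\nu\in\Lambda^0$ gets invoked to swap $\pi(\lambda)^{-1}$ and $\pi(\nu)$, and correctly recover $f(\pi(\nu)^{-1}\tau)$ from the sum $\sum_g f(u_g)\langle\pi(\nu)^{-1}\tau,u_g\rangle$ via the rotated orthonormal basis $\{u_g\}$; the rest is group-theoretic bookkeeping built on Theorems \ref{COMMTIMEFREQUENCY}, \ref{GABORCOMMUTES}, and \ref{ONBHS}.
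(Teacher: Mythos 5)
Your proposal is correct and follows essentially the same route as the paper: expand $S_{f,\tau,\Lambda}$ in the Hilbert--Schmidt orthonormal basis of time-frequency shifts, use the commutation of $S_{f,\tau,\Lambda}$ with $\pi(\lambda)$, $\lambda\in\Lambda$, together with linear independence to kill the coefficients off $\Lambda^0$, and then compute $c_\nu=\tfrac{o(\Lambda)}{o(G)}f(\pi(\nu)^{-1}\tau)$ from the HS inner product. The only cosmetic difference is that you evaluate the inner sum via the rotated orthonormal basis $\{\pi(\lambda)^{-1}\delta_g\}_{g\in G}$, whereas the paper keeps everything in the standard basis and invokes the commutativity $\pi(\lambda)^{-1}\pi(\nu)^{-1}\pi(\lambda)=\pi(\nu)^{-1}$ at the final step; the two computations are identical term by term.
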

\begin{proof}
	Theorem \ref{ONBHS} says that 	
	\begin{align*}
		S_{f, \tau, \Lambda}&=\sum_{\mu \in G\times \widehat{G}}\left\langle 	S_{f, \tau,  \Lambda}, \frac{1}{\sqrt{o(G)}}\pi(\mu)\right \rangle _\text{HS}\frac{1}{\sqrt{o(G)}}\pi(\mu)\\
		&=\sum_{\mu \in G\times \widehat{G}}c_\mu\pi(\mu), \quad \text{ where } c_\mu\coloneqq \frac{1}{o(G)}\left\langle 	S_{f, \tau,  \Lambda}, \pi(\mu)\right \rangle _\text{HS}, \forall \mu \in G\times \widehat{G}.
	\end{align*}
	Let $\mu \in G\times \widehat{G}$ and $\lambda \in \Lambda$. Theorem  \ref{COMMTIMEFREQUENCY} then gives a $d_{\mu, \lambda}\in \mathbb{T}$ such that $\pi(\lambda)^{-1}\pi(\mu)\pi(\lambda)=d_{\mu, \lambda}\pi(\mu)$.  Using Theorem \ref{GABORCOMMUTES}, 
	\begin{align*}
		\sum_{\mu \in G\times \widehat{G}}c_\mu\pi(\mu)&=\pi(\lambda)^{-1} S_{f, \tau,  \Lambda}\pi (\lambda)=\pi(\lambda)^{-1}\left(\sum_{\mu \in G\times \widehat{G}}c_\mu\pi(\mu)\right)\pi (\lambda)\\
		&=\sum_{\mu \in G\times \widehat{G}}c_\mu\pi(\lambda)^{-1}\pi(\mu)\pi(\lambda)=\sum_{\mu \in G\times \widehat{G}}c_\mu d_{\mu, \lambda}\pi(\mu).
	\end{align*}
	Therefore
	\begin{align*}
		\sum_{\mu \in G\times \widehat{G}}c_\mu(1-d_{\mu, \lambda})\pi(\mu)=0, \quad \forall \lambda \in \Lambda.
	\end{align*}
	Since $\{\pi(\lambda)\}_{\lambda \in G\times \widehat{G}}$ is linearly independent (Theorem \ref{ONBHS}), we then have $c_\mu(1-d_{\mu, \lambda})=0$, for all $\lambda \in \Lambda, $ for all  $\mu \in G\times \widehat{G}$. Let $\mu\notin \Lambda^0$. We claim that $c_\mu=0$. If this is not true, then $1-d_{\mu, \lambda}=0$ for all $\lambda \in \Lambda$. But then we have $\pi(\mu)\pi(\lambda)=\pi(\lambda)\pi(\mu)$ for all $\lambda \in \Lambda$ which says $\mu\in \Lambda^0$ which is impossible. So claim holds. Hence the formula for the Gabor-Schauder  frame operator reduces to 
	\begin{align}\label{JANSSENPROOF}
		S_{f, \tau,  \Lambda}=	\sum_{\mu \in \Lambda^0}c_\mu\pi(\mu).
	\end{align}
	Now let $\mu \in \Lambda^0$. Then using Equation (\ref{HSBASIS}), 
	
	\begin{align*}
		c_\mu&=	\frac{1}{o(G)}\left\langle 	S_{f, \tau,  \Lambda}, \pi(\mu)\right \rangle _\text{HS}=	\frac{1}{o(G)}\sum_{g \in G}\langle S_{f, \tau,  \Lambda}\delta_g, \pi(\mu) \delta_g\rangle\\
		&=	\frac{1}{o(G)}\sum_{g \in G}\left\langle  \sum_{\lambda\in \Lambda}f(\pi(\lambda)^{-1}\delta_g)  \pi(\lambda)\tau , \pi(\mu) \delta_g\right \rangle 
		=	\frac{1}{o(G)} \sum_{\lambda\in \Lambda}\sum_{g \in G}f(\pi(\lambda)^{-1}\delta_g) \langle\pi(\lambda)\tau , \pi(\mu) \delta_g \rangle\\
		&=\frac{1}{o(G)} \sum_{\lambda\in \Lambda}\sum_{g \in G}f(\pi(\lambda)^{-1}\delta_g) \langle\pi(\mu)^{-1}\pi(\lambda)\tau ,  \delta_g \rangle
		=\frac{1}{o(G)} \sum_{\lambda\in \Lambda}\sum_{g \in G}f\left[\langle\pi(\mu)^{-1}\pi(\lambda)\tau ,  \delta_g \rangle\pi(\lambda)^{-1}\delta_g\right]\\
			&=\frac{1}{o(G)} \sum_{\lambda\in \Lambda} f\left[\sum_{g \in G}\langle\pi(\mu)^{-1}\pi(\lambda)\tau ,  \delta_g \rangle\pi(\lambda)^{-1}\delta_g\right]=\frac{1}{o(G)} \sum_{\lambda\in \Lambda} f\left[\pi(\lambda)^{-1}\sum_{g \in G}\langle\pi(\mu)^{-1}\pi(\lambda)\tau ,  \delta_g \rangle\delta_g\right]\\
			&=\frac{1}{o(G)} \sum_{\lambda\in \Lambda} f(\pi(\lambda)^{-1}\pi(\mu)^{-1}\pi(\lambda)\tau)=\frac{1}{o(G)} \sum_{\lambda\in \Lambda} f(\pi(\mu)^{-1}\pi(\lambda)^{-1}\pi(\lambda)\tau)\\
			&=\frac{1}{o(G)} \sum_{\lambda\in \Lambda} f(\pi(\mu)^{-1}\tau)=\frac{o(\Lambda)}{o(G)}f(\pi(\mu)^{-1}\tau).
	\end{align*}
	By substituting the value of $c_\mu$ in Equation (\ref{JANSSENPROOF}) finally gives 
	\begin{align*}
		S_{f, \tau,  \Lambda}=\frac{o(\Lambda)}{o(G)}	\sum_{\mu \in \Lambda^0}f(\pi(\mu)^{-1}\tau) \pi(\mu).	
	\end{align*}
\end{proof}
We call Equation (\ref{BEFOREJANSSEN}) as \textbf{Fundamental Identity of Gabor-Schauder Frames (FIGSF)} or \textbf{Schauder-Wexler-Raz identity} or \textbf{Schauder-Janssen representation} of Gabor-Schauder  frame operator $S_{f, \tau,  \Lambda}$. It represents the frame operator corresponding to $\Lambda$ in terms of frame operator corresponding to $\Lambda^0$.  For Hilbert spaces it reduces to the usual Fundamental Identity of Gabor Analysis   (famously known as FIGA) derived in \cite{PFANDER} and \cite{FEICHTINGERKOZEKLUEF}. First major consequence of FIGSF is the following criterion.
\begin{theorem}\label{WEXLERRAZTHEOREM}
	Let  	$\Lambda $ be  a subgroup of $G\times \widehat{G}$ and  $\tau  \in \mathbb{C}^{o(G)}$,  $f\in  (\mathbb{C}^{o(G)})^*$. Then 
	\begin{align}\label{ALGEBRAICWEXLERRAZ}
		x=\sum_{\lambda\in \Lambda}f(\pi(\lambda)^{-1}x) \pi(\lambda)\tau  , \quad \forall x \in \mathbb{C}^{o(G)}	\iff 
		f(\pi(\mu)^{-1}\tau) =\frac{o(G)}{o(\Lambda)}\delta_{\mu,0}, \quad \forall \mu\in \Lambda^0.
	\end{align}	
\end{theorem}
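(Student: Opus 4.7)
The plan is to recognize that the expansion identity on the left of (\ref{ALGEBRAICWEXLERRAZ}) is exactly the statement that the Gabor-Schauder frame operator equals the identity, i.e., $S_{f,\tau,\Lambda}=I_{\mathbb{C}^{o(G)}}$, and then to invoke Theorem \ref{JANSSENTHEOREM} to express $S_{f,\tau,\Lambda}$ as a linear combination of the time-frequency shifts indexed by the adjoint lattice $\Lambda^{0}$. The equivalence in (\ref{ALGEBRAICWEXLERRAZ}) will then reduce to an equality between two linear combinations of time-frequency shifts, which can be read off coefficient-wise thanks to the linear independence provided by Theorem \ref{ONBHS}.

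First I would observe that the identity element $0=(0,1_{G})$ of $G\times\widehat{G}$ lies in $\Lambda^{0}$ (since $\pi(0)=M_{1_{G}}T_{0}=I_{\mathbb{C}^{o(G)}}$ commutes with every $\pi(\lambda)$), and that $\pi(0)=I_{\mathbb{C}^{o(G)}}$. By Theorem \ref{JANSSENTHEOREM}, the left-hand condition in (\ref{ALGEBRAICWEXLERRAZ}) is equivalent to
\begin{align*}
I_{\mathbb{C}^{o(G)}}\;=\;\frac{o(\Lambda)}{o(G)}\sum_{\mu\in\Lambda^{0}}f(\pi(\mu)^{-1}\tau)\,\pi(\mu).
\end{align*}
Writing the left side as $1\cdot\pi(0)$ and moving everything to a single sum indexed by $\Lambda^{0}$, this becomes
\begin{align*}
\sum_{\mu\in\Lambda^{0}}\left(\frac{o(\Lambda)}{o(G)}f(\pi(\mu)^{-1}\tau)-\delta_{\mu,0}\right)\pi(\mu)\;=\;0.
\end{align*}

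Now I would apply Theorem \ref{ONBHS}: the family $\{\pi(\lambda)/\sqrt{o(G)}\}_{\lambda\in G\times\widehat{G}}$ is an orthonormal basis of $\mathrm{HS}(\mathbb{C}^{o(G)})$, hence in particular linearly independent, so its restriction to $\Lambda^{0}\subseteq G\times\widehat{G}$ is linearly independent too. Coefficient comparison then forces $\frac{o(\Lambda)}{o(G)}f(\pi(\mu)^{-1}\tau)=\delta_{\mu,0}$ for every $\mu\in\Lambda^{0}$, which rearranges to the Wexler-Raz equality $f(\pi(\mu)^{-1}\tau)=\frac{o(G)}{o(\Lambda)}\delta_{\mu,0}$. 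Conversely, if the Wexler-Raz equalities hold, substituting them into the Janssen representation of $S_{f,\tau,\Lambda}$ gives $S_{f,\tau,\Lambda}=\frac{o(\Lambda)}{o(G)}\cdot\frac{o(G)}{o(\Lambda)}\,\pi(0)=I_{\mathbb{C}^{o(G)}}$, which is the left-hand condition.

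There is essentially no genuine obstacle: Janssen's representation (Theorem \ref{JANSSENTHEOREM}) does the analytical work and Theorem \ref{ONBHS} supplies the linear independence that lets us read off coefficients. The only point demanding a moment of care is bookkeeping of the identity element of $G\times\widehat{G}$ and its membership in $\Lambda^{0}$, so that the Kronecker-delta on the right of (\ref{ALGEBRAICWEXLERRAZ}) is interpreted against the correct distinguished element.
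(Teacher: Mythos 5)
Your proposal is correct and follows essentially the same route as the paper's proof: both directions rest on the Janssen representation of $S_{f,\tau,\Lambda}$ from Theorem \ref{JANSSENTHEOREM} combined with the linear independence of the time-frequency shifts supplied by Theorem \ref{ONBHS}. The only cosmetic difference is that you absorb the identity operator into the sum as $1\cdot\pi(0)$ and compare all coefficients at once, whereas the paper splits off the $\mu=(0,0)$ term (whose coefficient is $\tfrac{o(\Lambda)}{o(G)}f(\tau)$) explicitly before invoking linear independence.
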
 
\begin{proof}
	$(\Rightarrow)$ We have that the Gabor-Schauder frame operator $S_{f, \tau, \Lambda}$ is identity operator. Theorem \ref{JANSSENTHEOREM} then says that 
	\begin{align*}
		I_{\mathbb{C}^{o(G)}}=\frac{o(\Lambda)}{o(G)}\sum_{\mu\in \Lambda^0}f(\pi(\mu)^{-1}\tau) \pi(\mu)
		=\frac{o(\Lambda)}{o(G)}f(\tau)	I_{\mathbb{C}^{o(G)}}+\frac{o(\Lambda)}{o(G)}\sum_{\mu\in \Lambda^0\setminus \{(0,0)\}}f(\pi(\mu)^{-1}\tau) \pi(\mu).
	\end{align*}
	By Theorem \ref{ONBHS}, the collection $	\{\pi(\lambda) \} _{\lambda\in G\times \widehat{G}}$	   is linearly independent. Thus the validity of previous equation gives 
	
	\begin{align*}
		f(\tau) =\frac{o(G)}{o(\Lambda)}, \quad f(\pi(\mu)^{-1}\tau) =0, \forall \mu\in \Lambda^0\setminus \{(0,0)\}.
	\end{align*}
	$(\Leftarrow)$ Using Equation (\ref{BEFOREJANSSEN}), 
	\begin{align*}
		S_{f, \tau,  \Lambda}x=\frac{o(\Lambda)}{o(G)}\sum_{\mu\in \Lambda^0}f(\pi(\mu)^{-1}\tau) \pi(\mu)x
		=\frac{o(\Lambda)}{o(G)}f(\tau)x+\frac{o(\Lambda)}{o(G)}\sum_{\mu\in \Lambda^0\setminus \{(0,0)\}}f(\pi(\mu)^{-1}\tau) \pi(\mu)x=x, \quad \forall x \in \mathbb{C}^{o(G)}.
	\end{align*}	
\end{proof}
We call Equation (\ref{ALGEBRAICWEXLERRAZ}) as \textbf{Schauder-Wexler-Raz criterion}. This is a generalization of Wexler-Raz criterion derived in \cite{PFANDER} and \cite{FEICHTINGERKOZEKLUEF}. The criterion says when we can say that certain pairs give Gabor-Schauder frames by checking an algebraic equation on the adjoint lattice. We conclude by deriving the following result, which we call \textbf{partial Schauder Ron-Shen duality}. Note that for Hilbert spaces, the theorem is ~if and only if' (see \cite{PFANDER} and \cite{FEICHTINGERKOZEKLUEF}).
\begin{theorem}\label{PARTIALRONSHEN}
	Let  	$\Lambda $ be  a subgroup of $G\times \widehat{G}$ and  $\tau \in \mathbb{C}^{o(G)}$, $f\in  (\mathbb{C}^{o(G)})^*$. If $(\{f(\pi(\lambda)^{-1})\}_{\lambda \in \Lambda}, \{\pi(\lambda)\tau\}_{\lambda \in \Lambda})$ is a Gabor-Schauder frame for $ \mathbb{C}^{o(G)}$, then both  sets $\{ \pi (\mu)^{-1}\tau\}_{\mu \in \Lambda^0}$ and  $\{f(\pi(\mu)^{-1})\}_{\mu \in \Lambda^0}$ are  linearly independent.
\end{theorem}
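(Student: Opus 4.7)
The plan is to handle both linear-independence statements uniformly by studying operators of the form $T=\sum_{\mu\in\Lambda^{0}}c_{\mu}\pi(\mu)^{-1}$ on $\mathbb{C}^{o(G)}$. The key structural observation is that each such $T$ commutes with $\pi(\lambda)$ for every $\lambda\in\Lambda$: from $\pi(\mu)\pi(\lambda)=\pi(\lambda)\pi(\mu)$ one multiplies by $\pi(\mu)^{-1}$ on the two sides to get $\pi(\mu)^{-1}\pi(\lambda)=\pi(\lambda)\pi(\mu)^{-1}$, and the property passes to finite linear combinations. Moreover, if $T$ is actually the zero operator then all the $c_{\mu}$ vanish: by Theorem \ref{COMMTIMEFREQUENCY}(iii) the family $\{\pi(\mu)^{-1}\}_{\mu\in\Lambda^{0}}$ consists of unimodular scalar multiples of distinct members of the orthonormal basis $\{\pi(\nu)\}_{\nu\in G\times\widehat{G}}$ of $\text{HS}(\mathbb{C}^{o(G)})$ provided by Theorem \ref{ONBHS}, so it is linearly independent in $\mathcal{L}(\mathbb{C}^{o(G)})$.

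For the first statement, I would start by assuming $T\tau=\sum_{\mu\in\Lambda^{0}}c_{\mu}\pi(\mu)^{-1}\tau=0$ and applying the commutation to obtain $T\pi(\lambda)\tau=\pi(\lambda)T\tau=0$ for every $\lambda\in\Lambda$. Because $S_{f,\tau,\Lambda}$ is invertible its range is all of $\mathbb{C}^{o(G)}$, and this range is visibly contained in the linear span of $\{\pi(\lambda)\tau\}_{\lambda\in\Lambda}$. Hence $T$ kills every vector in $\mathbb{C}^{o(G)}$, so $T=0$ and the preceding paragraph forces $c_{\mu}=0$ for every $\mu$.

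For the second statement, I would assume $\sum_{\mu\in\Lambda^{0}}c_{\mu}f(\pi(\mu)^{-1})=0$ as a functional, equivalently $f(Ty)=0$ for all $y\in\mathbb{C}^{o(G)}$. Replacing $y$ by $\pi(\lambda)^{-1}y$ and using the commutation once more yields $f(\pi(\lambda)^{-1}Ty)=f(T\pi(\lambda)^{-1}y)=0$ for every $y$ and every $\lambda\in\Lambda$. Thus $Ty$ lies in the kernel of the analysis map $z\mapsto(f(\pi(\lambda)^{-1}z))_{\lambda\in\Lambda}$; but this analysis map is injective, since composing it on the left with the synthesis $(a_{\lambda})_{\lambda\in\Lambda}\mapsto\sum_{\lambda\in\Lambda}a_{\lambda}\pi(\lambda)\tau$ produces the invertible operator $S_{f,\tau,\Lambda}$. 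Therefore $Ty=0$ for every $y$, hence $T=0$ and again all the $c_{\mu}$ vanish.

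No step of this plan strikes me as a serious obstacle; the most delicate point is merely the unimodular bookkeeping between $\pi(\mu)^{-1}$ and $\pi(-\mu)$ that underlies the linear independence claim for $\{\pi(\mu)^{-1}\}_{\mu\in\Lambda^{0}}$. The reason only one direction appears here, whereas the Hilbert space Ron-Shen duality is an ``iff'', is that the converse would require a quantitative Riesz-type bound on the adjoint side which does not come for free in the present Banach framework.
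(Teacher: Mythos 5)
Your argument is correct, but it follows a genuinely different route from the paper's. The paper first reduces (via the canonical dual, Corollary \ref{CANONICALDUALGABOR}) to the case $S_{f,\tau,\Lambda}=I_{\mathbb{C}^{o(G)}}$ and then invokes the Schauder--Wexler--Raz criterion (Theorem \ref{WEXLERRAZTHEOREM}), which supplies the biorthogonality relation $f(\pi(\mu)^{-1}\tau)=\frac{o(G)}{o(\Lambda)}\delta_{\mu,0}$ on $\Lambda^{0}$; linear independence of both families then drops out by pairing a putative dependence relation against this biorthogonal system. You instead work at the level of the operator $T=\sum_{\mu\in\Lambda^{0}}c_{\mu}\pi(\mu)^{-1}$, using only three ingredients: that $T$ commutes with $\pi(\lambda)$ for $\lambda\in\Lambda$ (the defining property of $\Lambda^{0}$), that $\{\pi(\nu)\}_{\nu\in G\times\widehat{G}}$ is linearly independent in $\mathcal{L}(\mathbb{C}^{o(G)})$ (Theorem \ref{ONBHS}, with the harmless unimodular factor from Theorem \ref{COMMTIMEFREQUENCY}(iii)), and the two soft consequences of invertibility of $S_{f,\tau,\Lambda}$ --- surjectivity forces $\operatorname{span}\{\pi(\lambda)\tau\}_{\lambda\in\Lambda}=\mathbb{C}^{o(G)}$, and injectivity forces the analysis map $z\mapsto(f(\pi(\lambda)^{-1}z))_{\lambda\in\Lambda}$ to be injective. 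This bypasses the Janssen representation (Theorem \ref{JANSSENTHEOREM}) and the Wexler--Raz computation entirely, so your proof is more self-contained and arguably more elementary; what the paper's route buys in exchange is the explicit quantitative biorthogonality on the adjoint lattice, which is the actual content of Ron--Shen duality and is reused elsewhere. Your closing remark about why only one implication survives in the Banach setting is consistent with the paper's own comment.
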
 
\begin{proof}
Without loss of generality we may assume that $S_{f, \tau, \Lambda}=I_{\mathbb{C}^{o(G)}}.$ Now Theorem  \ref{WEXLERRAZTHEOREM}   says that 
	\begin{align*}
		f(\pi(\mu)^{-1}\tau) =\frac{o(G)}{o(\Lambda)}\delta_{\mu,0}, \quad \forall \mu\in \Lambda^0.
	\end{align*}
	Let $\lambda, \mu \in \Lambda^0$ be such that $\lambda\neq  \mu$. Then, since $ \Lambda^0$ is a group,  we get that $	f(\pi(\lambda)^{-1}\pi(\mu)^{-1}\tau) =0$. We now suppose that $\sum_{\mu \in \Lambda^0}c_\mu \pi(\mu)^{-1}\tau=0$ for some $c_\mu \in \mathbb{C}$. Then for each $\lambda \in \Lambda^0$, we have 
	\begin{align*}
		0=f\left(\pi(\lambda)^{-1}\left(\sum_{\mu \in \Lambda^0}c_\mu \pi(\mu)^{-1}\tau\right)\right)=\sum_{\mu \in \Lambda^0}c_\mu f(\pi(\lambda)^{-1}\pi(\mu)^{-1}\tau)=c_\lambda \frac{o(G)}{o(\Lambda)}.
	\end{align*}
	Therefore  $\{ \pi (\lambda)\tau\}_{\lambda \in \Lambda^0}$ is linearly independent. On the other hand, let $\sum_{\mu \in \Lambda^0}d_\mu f(\pi(\mu)^{-1})=0$ for some $d_\mu \in \mathbb{C}$. Then for each $\lambda \in \Lambda^0$, we have 
	\begin{align*}
		0=\sum_{\mu \in \Lambda^0}d_\mu f(\pi(\mu)^{-1}\pi(\lambda)^{-1} \tau)=d_\lambda \frac{o(G)}{o(\Lambda)}.
	\end{align*}
	Therefore   $\{f(\pi(\mu)^{-1})\}_{\mu \in \Lambda^0}$ is linearly independent.
\end{proof}

  \section{Acknowledgments}
  Author thanks Prof. B.V. Rajarama Bhat for several useful discussions and suggestions which improved the paper. Author  is supported by J.C. Bose Fellowship of Prof. B.V. Rajarama Bhat.

 \bibliographystyle{plain}
 \bibliography{reference.bib}

\end{document}